\newtheorem{theorem}{Theorem}[section]
\newtheorem{conjecture}{Conjecture}
\newtheorem{corollary}[theorem] {Corollary}
\newtheorem{lemma} [theorem]{Lemma}
\begin{document}
\label{'ubf'}  
\setcounter{page}{1}
\markboth
{\hspace*{-9mm} \centerline{\footnotesize $q$-analogue of the generalized Stieltjes constants}}
{\centerline{\footnotesize Tapas Chatterjee and Sonam Garg } \hspace*{-9mm}}
\vspace*{-2cm}

\begin{center}
{{\textbf{On arithmetic nature of $q$-analogue of the generalized Stieltjes constants}}\\
\vspace{.2cm}
\medskip
{\sc Tapas Chatterjee \footnote{Research of the first author is partly supported by the core research grant CRG/2023/000804 of the Science and Engineering Research Board of DST, Government of India.}}\\
{\footnotesize  Department of Mathematics,}\\
{\footnotesize Indian Institute of Technology Ropar, Punjab, India.}\\
{\footnotesize e-mail: {\it tapasc@iitrpr.ac.in}}\\
\medskip
{\sc Sonam Garg\footnote{Research of the second author is supported by the University Grants Commission (UGC), India under File No.: 972/(CSIR-UGC NET JUNE 2018).}}\\
{\footnotesize Department of Mathematics, }\\
{\footnotesize Indian Institute of Technology Ropar, Punjab, India.}\\
{\footnotesize e-mail: {\it 2018maz0009@iitrpr.ac.in}}
\medskip}
\end{center}

\thispagestyle{empty} 
\vspace{-.4cm}
\hrulefill

\begin{abstract}  
{\footnotesize }
In this article, our aim is to extend the research conducted by Kurokawa and Wakayama in 2003, particularly focusing on the $q$-analogue of the Hurwitz zeta function. Our specific emphasis lies in exploring the coefficients in the Laurent series expansion of a $q$-analogue of the Hurwitz zeta function around $s=1$. We establish the closed-form expressions for the first two coefficients in the Laurent series of the $q$-Hurwitz zeta function. Additionally, utilizing the reflection formula for the digamma function and the identity of Bernoulli polynomials, we explore transcendence results related to $\gamma_0(q,x)$ for $q>1$ and $0 < x <1$, where $\gamma_0(q,x)$ is the constant term which appears in the Laurent series expansion of $q$-Hurwitz zeta function around $s=1$.

Furthermore, we put forth a conjecture about the linear independence of special values of $\gamma_0(q,x)$ along with $1$ at rational arguments with co-prime conditions, over the field of rational numbers. Finally, we show that at least one more than half of the numbers are linearly independent over the field of rationals.
\end{abstract}
\hrulefill

{\small \textbf{Key words and phrases}: }Baker's theory, Bernoulli numbers and polynomials, Digamma function, Lambert series, Okada's criterion, $q$-Hurwitz zeta function. \\

{\bf{Mathematics Subject Classification 2020:}} 05A30, 11J81, 11J72, 11M35, 11B68.

\vspace{-.37cm}

\section{\bf Introduction}

In 1882, Adolf Hurwitz introduced one of the generalizations of the Riemann zeta function namely, Hurwitz zeta function. The Riemann zeta function is absolutely convergent for all complex numbers $s$ and is defined by the following expression: $$\zeta(s) = \sum_{n=1}^{\infty}\frac{1}{n^s},$$ such that $\Re(s)>1$. Similarly, the Hurwitz zeta function is defined as:
\begin{align*}
\zeta(s,a) = \sum_{n=0}^{\infty}\frac{1}{(n+a)^s},
\end{align*}
where $s$ is a complex number with real part greater than $1$, and $0 < a \leq 1$. Like the Riemann zeta function, it can be analytically continued to the entire complex plane except at $s=1$ where it has a simple pole with residue one. This is reflected in the Laurent series expansion, as indicated: $$\zeta(s,a) = \frac{1}{s-1} + \displaystyle \sum_{n=0}^{\infty}\frac{(-1)^n}{n!}\gamma_n(a)(s-1)^n,$$ where $\gamma_n(a)$ are the generalized Stieltjes constants. The Hurwitz zeta function appears in many areas of mathematics, including number theory, complex analysis, and physics. It is particularly useful in studying the distribution of prime numbers and in the study of modular forms. The function has many interesting properties, including functional equations, integral representations, and connections to special functions such as the gamma function and polylogarithms. The Hurwitz zeta function is related to the gamma function, $\Gamma(s)$, with the following integral representation:
\begin{align*}
\zeta(s,a) = \frac{1}{\Gamma(s)}\int_{0}^{\infty}\frac{x^{s-1}e^{-ax}}{1-e^{-x}}dx,
\end{align*}
where $\Re(s) >1$ and $\Re(a)>0$. Since its introduction, it has been extensively studied by many mathematicians and has become an important tool in various fields of mathematics. Several mathematicians, including Landau, Ramaswami, and Wilton, have derived series representations for both the Riemann and the Hurwitz zeta functions in the past. Moreover, the Hurwitz zeta function satisfies several interesting functional equations, which relate its values at different points in the complex plane. These functional equations can be used to derive various identities and properties of the function. In other directions, many mathematicians have developed an increasing interest in an alternative generalization of classical functions referred to as $q$-analogues. A $q$-analogue refers to a modified version of a function or mathematical structure, usually involving a parameter $q$, such that as $q$ approaches $1$, the $q$-analogue reduces to the original function or structure, for any real number $0<q<1$ or $q>1$. A $q$-analogue is not unique and so among the numerous $q$-analogues of the Riemann zeta function, a notable study conducted by Kurokawa and Wakayama in 2003 focused on the following specific $q$-analogue of the Riemann zeta function (see ref.\,\cite{KW}):
\begin{align} \label{E1}
\zeta_q(s) = \sum_{n=1}^\infty\frac{q^n}{[n]_q^s}, ~~~~~~~~~ \Re(s) > 1
\end{align}
and studied a $q$-analogue of the Euler's constant. Further, Chatterjee and Garg in ref.\,\cite{TSG, TSG1, TSG2} extended their results related to $q$-analogue of the Riemann zeta function and the double zeta function and their algebraic identities. They also investigated the other coefficients in the Laurent series expansion of $q$-Riemann zeta function, $\gamma_k(q)$, and referred to them as a $q$-analogue of the Stieltjes constants. Also, they discussed the linear independence of the set of numbers involving $\gamma_0(q)$ over $\mathbb{Q}$. In addition to the above, Kurokawa and Wakayama also discussed a $q$-analogue of the Hurwitz zeta function in ref.\,\cite{KW}. Specifically, they investigated the following $q$-analogue of the Hurwitz zeta function, denoted as $\zeta_q(s,x)$, for the case when $q>1$:
\begin{align} \label{E2}
\zeta_q(s,x) = \sum_{n=0}^\infty\frac{q^{n+x}}{[n+x]_q^s}, ~~~~ \Re(s)>1,
\end{align}
where $x \notin \mathbb{Z}_{\leq 0}$ and gave a $q$-analogue of the limit formula of Lerch. In this regard, they gave the following theorem:
\begin{theorem}
Let $q>1$. Then, $\zeta_q(s,x)$ is meromorphic for $s \in \mathbb{C}$. Moreover, $\zeta_q(s,x)$ has a simple pole at $s=1$, and have the limit formula:
\begin{align*}
\lim_{s\rightarrow 1}\Bigg(\zeta_q(s,x) - \frac{q-1}{\log q} \cdot \frac{1}{s-1}\Bigg) = -\frac{q-1}{\log q} \cdot \frac{\Gamma_q^{\prime}}{\Gamma_q}(x).
\end{align*}
\end{theorem}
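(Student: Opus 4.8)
The plan is to produce an \emph{explicit} meromorphic continuation of $\zeta_q(s,x)$ by expanding each summand as a geometric-type series in the small quantity $q^{-(n+x)}$, and then to read off the pole and the constant term at $s=1$ directly from that continuation. Throughout I take $x>0$ (which covers the range $0<x<1$ of interest; the general case $x\notin\mathbb{Z}_{\le 0}$ then follows by analytic continuation in $x$ after peeling off the finitely many terms with $n+x<0$). First I would use $[n+x]_q=(q^{n+x}-1)/(q-1)$ to rewrite, for $\Re(s)>1$,
\[
\zeta_q(s,x)=(q-1)^s\sum_{n=0}^\infty q^{(n+x)(1-s)}\bigl(1-q^{-(n+x)}\bigr)^{-s}.
\]
Since $q>1$ and $x>0$, every $q^{-(n+x)}$ lies in $(0,1)$, so the binomial series $(1-u)^{-s}=\sum_{k\ge 0}\binom{s+k-1}{k}u^{k}$ applies to each factor.

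Next I would justify interchanging the $n$- and $k$-summations — absolute convergence for $\Re(s)>1$ lets me invoke Fubini/Tonelli — and evaluate the inner geometric series in $n$, obtaining the key identity
\[
\zeta_q(s,x)=(q-1)^s\sum_{k=0}^\infty\binom{s+k-1}{k}\,\frac{q^{x(1-s-k)}}{1-q^{\,1-s-k}}.
\]
Here each $\binom{s+k-1}{k}$ is a polynomial in $s$, and for $x>0$ the $k$-series converges locally uniformly away from the zeros of the denominators (the summands decay geometrically times a polynomial in $k$); this delivers the claimed meromorphic continuation to all of $\mathbb{C}$. The poles occur where $q^{1-s-k}=1$; at $s=1$ only the $k=0$ term is singular, since for $k\ge 1$ one has $q^{-k}\ne 1$. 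Hence $\zeta_q(s,x)$ has a simple pole at $s=1$.

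Then I would extract the Laurent data at $s=1$. Writing $s=1+\varepsilon$ and expanding $(q-1)^{1+\varepsilon}$, $q^{-x\varepsilon}$, and $1-q^{-\varepsilon}=\varepsilon\log q\,(1-\tfrac{\varepsilon\log q}{2}+\cdots)$ to first order, the $k=0$ term produces the principal part $\frac{q-1}{\log q}\cdot\frac{1}{s-1}$ together with an explicit constant, while the terms $k\ge 1$, being analytic at $s=1$, contribute their values there. Collecting everything gives
\[
\lim_{s\to1}\Bigl(\zeta_q(s,x)-\frac{q-1}{\log q}\cdot\frac{1}{s-1}\Bigr)
=\frac{q-1}{\log q}\Bigl(\log(q-1)-x\log q+\tfrac{\log q}{2}\Bigr)
+(q-1)\sum_{k=1}^\infty\frac{q^{-kx}}{1-q^{-k}}.
\]

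Finally I would identify the right-hand side with $-\frac{q-1}{\log q}\,\frac{\Gamma_q'}{\Gamma_q}(x)$. Expanding $\frac{1}{1-q^{-k}}=\sum_{j\ge0}q^{-kj}$ and swapping the order of the $j,k$ summations converts the Lambert-type sum into $\sum_{j\ge0}\frac{q^{-(x+j)}}{1-q^{-(x+j)}}$, which is precisely the series appearing in the $q$-digamma function for $q>1$,
\[
\psi_q(x)=\frac{\Gamma_q'}{\Gamma_q}(x)
=-\log(q-1)+\Bigl(x-\tfrac12\Bigr)\log q
-\log q\sum_{n=0}^{\infty}\frac{q^{-(n+x)}}{1-q^{-(n+x)}};
\]
matching terms then yields $\lim_{s\to1}(\cdots)=-\frac{q-1}{\log q}\,\frac{\Gamma_q'}{\Gamma_q}(x)$, as asserted. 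The main obstacle is not any single hard estimate but the careful bookkeeping in the middle steps: rigorously justifying the double-sum interchange (and the resulting term-by-term analytic continuation) and verifying that the rearranged Lambert series reassembles exactly into the known $q$-digamma series.
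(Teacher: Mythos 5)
Your argument is correct and follows essentially the same route the paper takes in its proof of Theorem \ref{T1}: your key identity is exactly the binomial expansion in Equations (\ref{E4})--(\ref{E5}), and the constant term you extract agrees with the paper's $\gamma_0(q,x)$ in (\ref{E7}), which reassembles into $-\frac{q-1}{\log q}\psi_q(x)$ via the $q$-digamma series (\ref{E3}). (The paper only quotes this particular statement from Kurokawa--Wakayama without proof, but your method is the one it uses for the refined version.)
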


In this article, we further investigate a $q$-variant of the Hurwitz zeta function given by Equation (\ref{E2}). We begin our study with the following theorem:
\begin{theorem}\label{T1}
The $q$-analogue of the Hurwitz zeta function defined in Equation (\ref{E2}) is meromorphic for $s\in \mathbb{C}$ and its Laurent series expansion around $s=1$ is given by:
\begin{align*}
\zeta_q(s,x) = \frac{q-1}{\log q}.\frac{1}{s-1} + \gamma_0(q,x) + \gamma_1(q,x)(s-1) + \gamma_2(q,x)(s-1)^2 + \gamma_3(q,x)(s-1)^3 + \cdots
\end{align*}
with
\begin{align*}
\gamma_0(q,x)& = \sum_{n=1}^\infty\frac{q^{n(1-x)}}{[n]_q} + \frac{(q-1)\log(q-1)}{\log q} - \frac{q-1}{2} + (q-1)(1-x)
\end{align*}
and
\begin{align*}
\gamma_1(q,x)& = \Bigg(\sum_{n=1}^\infty\frac{q^{n(1-x)}}{[n]_q} + \frac{(q-1)\log(q-1)}{2\log q} - \frac{q-1}{2} + (q-1)(1-x)\Bigg)\log(q-1)\\
&\quad + \Bigg(\frac{q-1}{12} - \sum_{n=1}^\infty \frac{(1 + (q^n-1)x)q^{n(1-x)}}{[n]_q(q^n-1)} - \frac{(q-1)(1-x)x}{2}\Bigg)\log q\\
&\quad\quad + \sum_{n=1}^\infty\frac{q^{n(1-x)}s(n+1,2)}{n![n]_q},
\end{align*}
where $s(n+1,i)$ are the unsigned Stirling numbers of the first kind.
\end{theorem}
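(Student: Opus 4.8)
The plan is to derive an explicit Lambert-type representation of $\zeta_q(s,x)$ that exhibits the pole at $s=1$ transparently, and then read off the Laurent coefficients by Taylor expansion about $s=1$. Writing $[n+x]_q=(q^{n+x}-1)/(q-1)$ and factoring $q^{n+x}-1=q^{n+x}(1-q^{-(n+x)})$, one obtains, for $\Re(s)>1$,
\[
\frac{q^{n+x}}{[n+x]_q^{s}} = (q-1)^{s}\,q^{(n+x)(1-s)}\bigl(1-q^{-(n+x)}\bigr)^{-s}.
\]
Since $q>1$ and $x>0$ force $0<q^{-(n+x)}<1$, I would apply the generalized binomial theorem $(1-y)^{-s}=\sum_{k\ge0}\binom{s+k-1}{k}y^{k}$ with $y=q^{-(n+x)}$, interchange the absolutely convergent double sum over $n$ and $k$, and evaluate the geometric series in $n$. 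This produces the key identity
\[
\zeta_q(s,x) = (q-1)^{s}\sum_{k=0}^{\infty}\binom{s+k-1}{k}\,\frac{q^{x(1-s-k)}}{1-q^{1-s-k}},
\]
valid a priori for $\Re(s)>1$.

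Meromorphic continuation would then come directly from this series. On any compact set $K$ avoiding the points $s=1-k+2\pi i m/\log q$ $(k\ge0,\ m\in\mathbb{Z})$, the general term is bounded by $C_K\,k^{a}\,q^{-xk}$ for some $a=a(K)$, since $\binom{s+k-1}{k}$ grows only polynomially in $k$ while $q^{x(1-s-k)}$ decays geometrically; hence the series converges locally uniformly and, by Weierstrass's theorem, defines a meromorphic function on $\mathbb{C}$ that agrees with $\zeta_q(s,x)$. Near $s=1$ the sole singular contribution is the $k=0$ term $T_0(s)=(q-1)^{s}q^{x(1-s)}/(1-q^{1-s})$, which carries a simple pole, whereas the tail $G(s):=(q-1)^{s}\sum_{k\ge1}\binom{s+k-1}{k}\phi_k(s)$, with $\phi_k(s)=q^{x(1-s-k)}/(1-q^{1-s-k})$, is holomorphic at $s=1$.

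I would compute the Laurent coefficients from these two pieces separately. Setting $u=s-1$, $A=\log(q-1)$, $L=\log q$, and using $(q-1)^{s}=(q-1)e^{uA}$, $q^{x(1-s)}=e^{-xuL}$ together with the Bernoulli expansion $\tfrac{1}{1-e^{-t}}=\tfrac1t+\tfrac12+\tfrac{t}{12}+\cdots$ at $t=uL$, the term $T_0$ yields the pole $\tfrac{q-1}{\log q}\cdot\tfrac{1}{s-1}$ and the non-series parts of $\gamma_0(q,x)$ and $\gamma_1(q,x)$. For the holomorphic tail, local uniform convergence licenses term-by-term evaluation of $G(1)$ and $G'(1)$; with $\binom{k}{k}=1$ and $\frac{d}{ds}\binom{s+k-1}{k}\big|_{s=1}=\psi(k+1)-\psi(1)=H_k$ (the $k$-th harmonic number), plus the derivative of $\phi_k$, one gets $G(1)=\sum_{k\ge1}q^{k(1-x)}/[k]_q$, the series term of $\gamma_0$, and the three series occurring in $\gamma_1$. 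Matching the harmonic-number series with $\sum_{k\ge1}q^{k(1-x)}s(k+1,2)/(k!\,[k]_q)$ invokes the classical identity $s(k+1,2)=k!\,H_k$ for the unsigned Stirling numbers of the first kind.

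The main obstacle I foresee is organizational rather than conceptual: assembling $\gamma_1(q,x)$ requires combining the $u^{1}$-coefficient of $T_0$ --- whose $(A-xL)^2/(2L)$ contribution spawns several cross terms in $A$, $x$, and $x^2$ --- with the three pieces of $G'(1)$, and verifying that all $(q-1)$-multiples regroup precisely into the two bracketed factors multiplying $\log(q-1)$ and $\log q$ in the stated formula. Alongside this bookkeeping, the two analytic interchanges --- the interchange of the double summation over $n$ and $k$, and the term-by-term differentiation of $G$ --- need justification, which in each case reduces to the geometric decay $q^{-xk}$ dominating the polynomial growth of $\binom{s+k-1}{k}$; these estimates are the points I would write out in full.
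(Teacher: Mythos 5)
Your proposal is correct and follows essentially the same route as the paper: your key identity $\zeta_q(s,x)=(q-1)^s\sum_{k\ge0}\binom{s+k-1}{k}\,q^{x(1-s-k)}/(1-q^{1-s-k})$ is exactly the paper's binomial-expansion formula (Equation (\ref{E4})) in an equivalent form, and the paper likewise isolates the $k=0$ term as the sole polar contribution at $s=1$ and Taylor-expands the remaining terms to assemble $\gamma_0$ and $\gamma_1$. Your added justifications (local uniform convergence for the interchange and for term-by-term differentiation, and the identity $s(k+1,2)=k!\,H_k$ linking the harmonic-number series to the stated Stirling-number series) are correct and only make explicit what the paper leaves implicit.
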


Now that we have presented the closed-form for $\gamma_0(q,x)$, we are ready to derive transcendence results pertaining to these constants. The statement of the result is outlined as follows:

\begin{theorem} \label{T2}
Let $q>1$ be any positive algebraic number, $b \geq 3$ be any integer, and $1 \leq a < b/2$ with $(a,b)=1$. Then,
\begin{align*}
\gamma_0 \left(q, \frac{a}{b} \right) - \gamma_0 \left(q,1- \frac{a}{b} \right) = \left( \frac{q-1}{\log q} \right) \pi\cot \left( \frac{\pi a}{b} \right)+ (2q-3) \left(\frac{1}{2} - \frac{a}{b} \right)
\end{align*}
is a transcendental number.
\end{theorem}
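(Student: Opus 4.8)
The plan is to start from the closed form of $\gamma_0(q,x)$ given in Theorem~\ref{T1} and compute the difference $\gamma_0(q,a/b) - \gamma_0(q,1-a/b)$ directly. Writing $x = a/b$, the Lambert-type series $\sum_{n=1}^\infty q^{n(1-x)}/[n]_q$ appears in both terms, and since $[n]_q = (q^n-1)/(q-1)$, I would first show that the combination of these two series relates to a $q$-analogue of the digamma function. The strategy is to recognize that the purely $q$-series part of the difference is exactly $\frac{q-1}{\log q}\bigl(\frac{\Gamma_q'}{\Gamma_q}(1-x) - \frac{\Gamma_q'}{\Gamma_q}(x)\bigr)$ up to the elementary polynomial-in-$x$ corrections, using the limit formula quoted in the first theorem of the excerpt. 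The remaining algebraic terms $(q-1)(1-x)$, $-(q-1)/2$, and the constant $\frac{(q-1)\log(q-1)}{\log q}$ either cancel or combine into the rational expression $(2q-3)(\tfrac12 - \tfrac{a}{b})$; in particular the $\log(q-1)$ terms cancel because they are symmetric in $x \leftrightarrow 1-x$.

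The key analytic input is the reflection formula. First I would invoke the $q$-analogue of the reflection formula for the $q$-digamma function, or equivalently pass to the $q\to$ limiting behavior, to obtain
\begin{align*}
\frac{\Gamma_q'}{\Gamma_q}(1-x) - \frac{\Gamma_q'}{\Gamma_q}(x) = \pi \cot(\pi x)
\end{align*}
in the appropriate normalization, which injects the factor $\pi\cot(\pi a/b)$ multiplied by $\frac{q-1}{\log q}$ into the difference. This matches the first term on the right-hand side of Theorem~\ref{T2}. I would verify carefully that the $q$-digamma reflection formula produces precisely the classical cotangent term, since this is where the structure of the Lambert series over $[n]_q$ must telescope correctly. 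Along the way the identity for Bernoulli polynomials (mentioned in the abstract) may be needed to handle the elementary linear-in-$x$ pieces and confirm the coefficient $2q-3$.

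Once the identity
\begin{align*}
\gamma_0\!\left(q,\tfrac{a}{b}\right) - \gamma_0\!\left(q,1-\tfrac{a}{b}\right) = \left(\frac{q-1}{\log q}\right)\pi\cot\!\left(\frac{\pi a}{b}\right) + (2q-3)\!\left(\tfrac12 - \tfrac{a}{b}\right)
\end{align*}
is established, the transcendence argument is the final step. Here $q$ is a positive algebraic number with $q>1$, so $q-1$ is algebraic and nonzero; by the Gelfond--Schneider theorem (Baker's theory, as flagged in the keywords), $\log q$ is transcendental for algebraic $q \neq 1$, so $\frac{q-1}{\log q}$ is transcendental. Moreover $\cot(\pi a/b)$ is algebraic (indeed it is an algebraic number for rational multiples of $\pi$, by Niven's theorem $\pi\cot(\pi a/b)$ behaves appropriately), while $\pi$ is transcendental; the product $\frac{q-1}{\log q}\,\pi\cot(\pi a/b)$ should be shown transcendental, and then added to the algebraic quantity $(2q-3)(\tfrac12 - \tfrac{a}{b})$. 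The main obstacle I anticipate is the transcendence bookkeeping in this last step: one cannot simply say "transcendental plus algebraic is transcendental" for a product involving both $\pi$ and $\log q$, so I would need to argue via linear independence of logarithms (Baker's theorem) that $\pi$ and $\log q$ together with $1$ cannot satisfy the algebraic relation that would be forced if the whole expression were algebraic. Establishing that $\pi\cdot(q-1)/\log q$ is transcendental, rather than merely irrational, is the delicate point, and I expect to lean on Baker's theorem on linear forms in logarithms (noting $\pi = -i\log(-1)$) to rule out any algebraic dependence.
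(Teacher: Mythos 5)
Your overall architecture --- derive the closed-form identity from the expression for $\gamma_0(q,x)$ in Theorem \ref{T1}, then apply Baker-type transcendence --- matches the paper, and your final transcendence step is essentially the paper's: one writes $\pi=-i\log(-1)$ and uses Baker's theorem to show that $1$, $\pi$, $\log q$ are linearly independent over $\overline{\mathbb{Q}}$ (the paper packages this as Lemma \ref{L3}), together with the observation that $\cot(\pi a/b)$ is a nonzero algebraic number for $1\le a<b/2$. (Minor quibble: transcendence of $\log q$ for algebraic $q\neq 0,1$ is Hermite--Lindemann rather than Gelfond--Schneider, and, as you yourself note, it would not suffice on its own.)

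The genuine gap is in how you propose to produce the cotangent. The identity you plan to invoke,
\begin{align*}
\frac{\Gamma_q'}{\Gamma_q}(1-x)-\frac{\Gamma_q'}{\Gamma_q}(x)=\pi\cot(\pi x),
\end{align*}
is false for $q\neq 1$: by Equation (\ref{E3}) the left-hand side equals $\log q\,\bigl((1-2x)-\sum_{n\ge1}(q^{-n(1-x)}-q^{-nx})/(1-q^{-n})\bigr)$, which depends on $q$ and, for fixed $0<x<1/2$, tends to $+\infty$ as $q\to\infty$; the true reflection formula for $\Gamma_q$ involves theta functions, not a cotangent. Moreover, since $\gamma_0(q,x)=-\frac{q-1}{\log q}\psi_q(x)$ holds exactly (this is the Kurokawa--Wakayama limit formula combined with Theorem \ref{T1}), your route would only recast the difference as $\frac{q-1}{\log q}\bigl(\psi_q(1-x)-\psi_q(x)\bigr)$, i.e.\ back into a $q$-series, and can never yield the stated right-hand side by itself. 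The paper's actual mechanism is different: it rewrites $\sum_{n\ge1}q^{n(1-x)}/[n]_q$ as $(q-1)\mathscr{L}_{1/q}(0,x)$ and substitutes the Banerjee--Wilkerson expansion at $s=0$ (Theorem \ref{T5}), which expresses $\mathscr{L}_{1/q}(0,x)$ through the \emph{classical} digamma $\psi(x)$ plus a Bernoulli tail $\sum_{k\ge1}B_kB_k(x)(\log q)^{k-1}/(k\,k!)$; the classical reflection formula $\psi(1-x)-\psi(x)=\pi\cot\pi x$ and the parity relation $B_k(1-x)=(-1)^kB_k(x)$ then annihilate everything except $\pi\cot\pi x$ and the single surviving $k=1$ term $-2B_1B_1(x)=x-\tfrac12$, which combines with $(q-1)(1-2x)$ to give $(2q-3)(\tfrac12-x)$. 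Without this input (or a proven substitute for it) your derivation of the identity does not go through.
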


The theorem yields a noteworthy consequence, encapsulated in the following corollary:
\begin{corollary} \label{C1}
Let $q>1$ be any positive algebraic number and $b \geq 3$ be any integer. Then, 
\begin{align*}
\sum_{\substack{a=1 \\ (a , b) = 1}}^{\lfloor b/2 \rfloor}\left ( \gamma_0 \left(q, \frac{a}{b} \right) - \gamma_0 \left(q,1 - \frac{a}{b} \right) \right) & =  \left( \frac{q-1}{\log q} \right) \pi \sum_{\substack{a=1 \\ (a , b) = 1}}^{\lfloor b/2 \rfloor} \cot \left(\frac{\pi a}{b} \right)\\
&\quad + (2q-3)  \left(\frac{\varphi(b)}{4} - \frac{1}{b}\sum_{\substack{a=1 \\ (a , b) = 1}}^{\lfloor b/2 \rfloor} a \right),
\end{align*}
where $\varphi$ is the Euler's phi-function. Also, the above sum is a transcendental number.	
\end{corollary}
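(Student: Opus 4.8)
The plan is to obtain the stated identity by summing the formula of Theorem~\ref{T2} over the prescribed range of $a$, and then to establish transcendence of the resulting quantity by a linear-forms-in-logarithms argument of the same flavour as the one underlying Theorem~\ref{T2}. First I would sum the Theorem~\ref{T2} identity over all $a$ with $1 \leq a < b/2$ and $\gcd(a,b)=1$. The cotangent terms assemble directly into $\big(\tfrac{q-1}{\log q}\big)\pi \sum_{a}\cot(\pi a/b)$. For the remaining piece I need the number of admissible $a$: since for $b \geq 3$ the totatives of $b$ in $[1,b-1]$ split into the $\varphi(b)/2$ complementary pairs $\{a,\,b-a\}$ and none of them equals $b/2$, exactly $\varphi(b)/2$ of them lie in $[1,\lfloor b/2\rfloor]$. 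Hence $\sum_{a}(2q-3)\big(\tfrac12-\tfrac ab\big) = (2q-3)\big(\tfrac{\varphi(b)}{4}-\tfrac1b\sum_{a}a\big)$, which is precisely the claimed algebraic term, completing the identity.

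For the transcendence assertion I would write the right-hand side as $\Theta := \big(\tfrac{q-1}{\log q}\big)\pi\, S + A$, where $S := \sum_{a}\cot(\pi a/b)$ and $A := (2q-3)\big(\tfrac{\varphi(b)}{4}-\tfrac1b\sum_{a}a\big)$. Since $q$ is algebraic, $A$ is algebraic. Each $\cot(\pi a/b)$ is algebraic, being the ratio of the algebraic numbers $\tfrac{\zeta+\zeta^{-1}}{2}$ and $\tfrac{\zeta-\zeta^{-1}}{2i}$ with $\zeta = e^{i\pi a/b}$ a root of unity; thus $S$ is algebraic. The crucial point I would establish is that $S \neq 0$: for every admissible $a$ one has $0 < \pi a/b < \pi/2$ (the value $a/b=\tfrac12$ is excluded because $\gcd(b/2,b)>1$ when $b \geq 3$), so each summand $\cot(\pi a/b)$ is strictly positive and therefore $S>0$. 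Consequently $S$ is a nonzero algebraic number.

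Finally I would deduce transcendence of $\Theta$ from Baker's theorem. Suppose, for contradiction, that $\Theta$ is algebraic. Then $\Theta - A = \big(\tfrac{q-1}{\log q}\big)\pi S$ is algebraic, and dividing by the nonzero algebraic numbers $S$ and $q-1$ shows that $\pi/\log q$ would be algebraic, say $\pi/\log q = \beta$ with $\beta \in \overline{\mathbb{Q}}^{\times}$. Using $i\pi = \log(-1)$ this rearranges to $\log(-1) - i\beta\log q = 0$, a nontrivial $\overline{\mathbb{Q}}$-linear relation between $\log(-1)$ and $\log q$. But $\log(-1)=i\pi$ is purely imaginary while $\log q$ is real and nonzero, so $\log(-1)$ and $\log q$ are linearly independent over $\mathbb{Q}$; Baker's theorem then forbids any nontrivial linear relation between them with algebraic coefficients, a contradiction. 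Hence $\Theta$ is transcendental. The main obstacle is the nonvanishing of the cotangent sum $S$, since a general rational cotangent sum can vanish; here the restriction to the half-range $[1,\lfloor b/2\rfloor]$ forces every summand into $(0,\pi/2)$, so positivity settles it cleanly, and the rest is a routine reduction to the linear independence of $\log q$ and $i\pi$.
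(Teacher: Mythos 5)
Your proof is correct and follows essentially the same route as the paper: sum the identity of Theorem~\ref{T2} over the $\varphi(b)/2$ admissible values of $a$. The paper's own proof stops at ``summing over all $a$ gives the result,'' so your explicit observation that $\sum_a \cot(\pi a/b) > 0$ (every summand lies in $(0,\pi/2)$ since $a=b/2$ is never coprime to $b$ for $b\geq 3$) is precisely the non-vanishing input needed to invoke the transcendence of $\pi/\log q$ via Baker's theorem (Lemma~\ref{L3}), a point the paper leaves implicit.
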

The focus of Theorem \ref{T2} is on the transcendental nature of the difference among the special values of $\gamma_0(q,x)$. So, one might ask questions regarding the interdependence between these numbers. Within this framework, we introduce the following theorem:
\begin{theorem} \label{T6}
	Let $q>1$ be any positive algebraic number and $b \geq 3$ be any integer. Then, the following set of numbers:
	$$\left \{	\gamma_0 \left(q, \frac{a}{b} \right) - \gamma_0 \left(q,1- \frac{a}{b} \right):1 \leq a < \frac{b}{2}, (a,b)=1\right \}$$
	is linearly independent over $\mathbb{Q}$.
\end{theorem}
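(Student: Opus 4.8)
The plan is to reduce the asserted $\mathbb{Q}$-linear independence to the $\mathbb{Q}$-linear independence of cotangent values by substituting the closed form from Theorem \ref{T2}, and then to separate the transcendental contributions using Baker's theorem. Suppose a vanishing rational linear combination
\[
\sum_{\substack{1\le a<b/2\\(a,b)=1}} c_a\left(\gamma_0\!\left(q,\tfrac ab\right)-\gamma_0\!\left(q,1-\tfrac ab\right)\right)=0,\qquad c_a\in\mathbb{Q}.
\]
Inserting the identity of Theorem \ref{T2} rewrites this relation as
\[
\frac{q-1}{\log q}\,\pi\,\beta+(2q-3)\,R=0,\qquad \beta:=\sum_a c_a\cot\!\left(\tfrac{\pi a}{b}\right),\quad R:=\sum_a c_a\!\left(\tfrac12-\tfrac ab\right).
\]
Since each $\cot(\pi a/b)$ is an algebraic number and the $c_a$ are rational, $\beta$ is algebraic and $R$ is rational; as $q$ is algebraic, both $(q-1)\beta$ and $(2q-3)R$ are algebraic.

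Next I would clear the denominator $\log q$ and use $i\pi=\log(-1)$ to present the relation as a linear form in logarithms of algebraic numbers,
\[
\bigl(-i(q-1)\beta\bigr)\,\log(-1)+(2q-3)R\,\log q=0,
\]
whose coefficients are algebraic. Because $\log(-1)=i\pi$ is purely imaginary and nonzero while $\log q$ is real and nonzero (as $q>1$), the two logarithms are linearly independent over $\mathbb{Q}$; Baker's theorem then upgrades this to linear independence over $\overline{\mathbb{Q}}$. Hence each algebraic coefficient must vanish. In particular the coefficient of $\log(-1)$ gives $(q-1)\beta=0$, and since $q>1$ this forces $\beta=\sum_a c_a\cot(\pi a/b)=0$. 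I would note that this conclusion is unconditional: even in the degenerate case $2q-3=0$, the coefficient of $\log(-1)$ still vanishes, so $\beta=0$ is extracted regardless of $R$.

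The final, and main, step is to conclude $c_a=0$ for every $a$ from $\sum_a c_a\cot(\pi a/b)=0$. This is exactly where Okada's criterion enters: it yields that the set $\{\cot(\pi a/b):1\le a<b/2,\ (a,b)=1\}$ is linearly independent over $\mathbb{Q}$, which closes the argument. I expect this to be the crux, since the cotangent values are algebraic and their $\mathbb{Q}$-independence is not a formal matter. The hypotheses $b\ge 3$ and $(a,b)=1$, together with the antisymmetry relation $\cot(\pi(b-a)/b)=-\cot(\pi a/b)$ that singles out $1\le a<b/2$ as the correct set of representatives, are precisely what make the criterion applicable. The step requiring most care is therefore checking that the combinatorial data $(a,b)$ match the hypotheses of Okada's criterion in the form that delivers linear independence over $\mathbb{Q}$ of these $\varphi(b)/2$ cotangent values.
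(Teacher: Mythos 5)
Your proposal is correct and follows essentially the same route as the paper: substitute the closed form from Theorem \ref{T2}, separate the relation into a $\pi/\log q$ part and a rational part via Baker's theorem (your explicit linear form in $\log(-1)$ and $\log q$ is exactly the content of the paper's Lemma \ref{L3}), and then invoke Okada's criterion to force all $c_a=0$. Your remark that $\beta=0$ is extracted even when $2q-3=0$ is a small point the paper glosses over, but it does not change the argument since only the cotangent sum is needed.
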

As a consequence of Theorem \ref{T2} and \ref{T6}, we have the following result:
\begin{corollary} \label{C2}
	Let $q>1$ be any positive algebraic number and $b \geq 3$ be any integer. Then, the following set of numbers:
	$$ \left \{1, \gamma_0 \left(q, \frac{a}{b} \right) - \gamma_0 \left(q,1- \frac{a}{b} \right):1 \leq a < \frac{b}{2}, (a,b)=1 \right \}$$
	is linearly independent over $\mathbb{Q}$. In particular, a ratio of any two numbers in the above set is an irrational number.
\end{corollary}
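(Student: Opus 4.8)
The plan is to test a hypothetical rational dependence against the explicit closed form from Theorem~\ref{T2}. Abbreviate $D_a := \gamma_0(q,a/b) - \gamma_0(q,1-a/b)$ and suppose that $c_0 + \sum_a c_a D_a = 0$ for some $c_0, c_a \in \mathbb{Q}$, the sum running over $1\le a<b/2$ with $(a,b)=1$. Inserting the identity $D_a = \frac{q-1}{\log q}\,\pi\cot(\pi a/b) + (2q-3)\big(\frac12 - \frac a b\big)$ and grouping terms, the relation takes the shape $\frac{(q-1)\pi}{\log q}\,\alpha + \big(c_0 + (2q-3)\beta\big) = 0$, where $\alpha := \sum_a c_a\cot(\pi a/b)$ and $\beta := \sum_a c_a(\frac12 - \frac a b)$. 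Because $q$ and each value $\cot(\pi a/b)$ are algebraic, both $\alpha$ and the bracketed remainder $c_0 + (2q-3)\beta$ are algebraic numbers.

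The decisive arithmetic ingredient is that $\frac{(q-1)\pi}{\log q}$ is transcendental, which I would establish from Baker's theorem on linear forms in logarithms. Since $q>1$ is real and algebraic, $\log q$ is real and nonzero while $\log(-1)=i\pi$ is purely imaginary, so $\log q$ and $i\pi$ are linearly independent over $\mathbb{Q}$; Baker's theorem then upgrades this to linear independence over $\overline{\mathbb{Q}}$. Were $\frac{(q-1)\pi}{\log q}$ equal to an algebraic number $\gamma$, multiplying through by $i$ would give the nontrivial $\overline{\mathbb{Q}}$-relation $(q-1)(i\pi) - (i\gamma)(\log q) = 0$ (nontrivial as $q-1\neq 0$), a contradiction. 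Hence if $\alpha\neq 0$, the term $\frac{(q-1)\pi}{\log q}\,\alpha$ is a transcendental number while the remainder is algebraic, so the two cannot sum to zero. This forces $\alpha = 0$.

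Finally I would conclude from $\alpha = \sum_a c_a\cot(\pi a/b)=0$ that every $c_a$ vanishes. This rests on the linear independence over $\mathbb{Q}$ of the cotangent values $\{\cot(\pi a/b): 1\le a<b/2,\ (a,b)=1\}$ supplied by Okada's criterion through the reflection formula $\psi(1-x)-\psi(x)=\pi\cot(\pi x)$ --- precisely the engine behind Theorem~\ref{T6}. With all $c_a=0$, the starting relation collapses to $c_0=0$, proving that $\{1\}\cup\{D_a\}$ is linearly independent over $\mathbb{Q}$. The closing remark about ratios is then immediate, since for distinct elements $u,v$ of this set a rational value $u/v=m/n$ would yield the nontrivial dependence $nu-mv=0$. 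I expect the main obstacle to be the transcendence step of the second paragraph rather than the accounting: one must rule out a conspiracy in which the algebraic factor $\alpha$ cancels the transcendence of $\frac{(q-1)\pi}{\log q}$ against the algebraic remainder, and it is exactly here that the Baker-theoretic input is indispensable. A secondary point to watch is that $q$ may itself be rational, so the remainder $(2q-3)\beta$ carries no irrationality of its own; this is why the argument must be channelled through the vanishing of $\alpha$.
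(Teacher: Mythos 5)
Your proposal is correct and follows essentially the same route as the paper: the paper derives Corollary \ref{C2} by combining Theorem \ref{T2} with Theorem \ref{T6}, and your argument simply unwinds the proof of Theorem \ref{T6} with the extra constant $c_0$ absorbed into the algebraic remainder, using the same three ingredients (the closed form for $D_a$, the Baker-type transcendence of $\pi/\log q$ as in Lemma \ref{L3}, and Okada's theorem). If anything, your version is more complete than the paper's one-line citation, since linear independence of the $D_a$ together with transcendence of each $D_a$ does not by itself rule out a nonzero rational combination being rational --- one does have to re-enter the decomposition as you do.
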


On the other hand, one may ask questions concerning the arithmetic nature of $\gamma_0(q, a/b)$, where $q >1$ is a positive algebraic number, $b \geq 3$ is any integer, and $1 \leq a <b$ with $(a,b)=1$. In this regard, we formulate the following conjecture:
\begin{conjecture}
	Let $q>1$ be any positive algebraic number and $b \geq 3$ be any integer. Then, the following $\varphi(b) +1$ real numbers:
	$$ \left\{1, \gamma_0\left(q, \frac{a}{b}\right): 1 \leq a < b, (a,b)=1 \right\}$$
	are linearly independent over the field of rationals.	
\end{conjecture}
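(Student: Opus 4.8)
The plan is to start from the closed form in Theorem~\ref{T1}; equivalently, comparing that expansion with the limit formula of Kurokawa and Wakayama recalled above gives the compact identity $\gamma_0(q,x)=-\frac{q-1}{\log q}\,\frac{\Gamma_q'}{\Gamma_q}(x)$, so the conjecture is the $q$-digamma analogue of the classical linear independence of the numbers $\psi(a/b)$. Writing $L(x)=\sum_{n\ge1}q^{n(1-x)}/[n]_q$ for the Lambert series in Theorem~\ref{T1} and setting $t=q^{-1/b}$, one has $L(a/b)=(q-1)\sum_{k\equiv a\ (b),\,k\ge1}t^{k}/(1-t^{k})$, a Lambert series restricted to an arithmetic progression and evaluated at the algebraic point $t$. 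A hypothetical relation $c_0+\sum_a c_a\gamma_0(q,a/b)=0$ with $c_a\in\Q$ then collapses, after collecting the elementary terms, to
\[
\sum_a c_a L(a/b)+\Big(\sum_a c_a\Big)\frac{(q-1)\log(q-1)}{\log q}+\alpha=0,\qquad \alpha\in\overline{\Q}.
\]

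Next I would pass to the basis adapted to the involution $a\mapsto b-a$, which for $b\ge3$ is fixed-point-free on the admissible indices (the only candidate $a=b/2$ violates $(a,b)=1$). Grouping each coprime pair $\{a,b-a\}$ and writing $S_a=\gamma_0(q,a/b)+\gamma_0(q,1-a/b)$ and $D_a=\gamma_0(q,a/b)-\gamma_0(q,1-a/b)$, the relation becomes a $\Q$-combination of the $S_a$, the $D_a$ and $1$. The decisive structural observation is that in the antisymmetric numbers $D_a$ the symmetric Lambert data cancels: by the $q$-analogue of the digamma reflection formula (the identity underlying Theorem~\ref{T2}) one has $D_a=\frac{q-1}{\log q}\pi\cot(\pi a/b)+(\text{algebraic})$, so $L(a/b)+L(1-a/b)$ never appears. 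This is exactly why the antisymmetric sublattice is already settled: Theorem~\ref{T6} and Corollary~\ref{C2}, via Baker's theorem (detaching $\pi/\log q$ from algebraic numbers) together with Okada's criterion for cotangent values, show that $\{1\}\cup\{D_a\}$ is independent. By contrast the symmetric numbers satisfy $S_a=M_a+2\frac{(q-1)\log(q-1)}{\log q}$, where $M_a:=L(a/b)+L(1-a/b)$ is a genuine symmetric Lambert series with no cotangent reduction.

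Thus the whole conjecture reduces to a single transcendence statement: that the $\frac{\varphi(b)}{2}+3$ real numbers $1$, $\frac{\pi}{\log q}$, $\frac{\log(q-1)}{\log q}$ and the symmetric Lambert values $M_a$ (one per coprime pair) are linearly independent over $\overline{\Q}$. Here is where the method that succeeds in the antisymmetric direction fails. The numbers $1,\frac{\pi}{\log q},\frac{\log(q-1)}{\log q}$ live in the field generated by logarithms of algebraic numbers, where Baker's theorem applies; but $M_a$ is not a logarithm of an algebraic number, and there is no way to separate it from the $\pi$-direction by linear forms in logarithms. The contrast with the classical case is instructive: there the symmetric part of $\psi(a/b)$ is, by Gauss's formula, a $\Q$-combination of the numbers $\log\sin(\pi n/b)$, and since each $2\sin(\pi n/b)$ is algebraic these are honest logarithms of algebraic numbers---precisely the input that lets Baker's theorem resolve the classical Stieltjes and Euler--Lehmer constants (as in the work of Murty and Saradha). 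The $q$-digamma $\frac{\Gamma_q'}{\Gamma_q}$ admits no Gauss-type closed form; its reflection formula yields only the odd (cotangent) part, leaving $M_a$ as an irreducibly $q$-series quantity.

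Consequently the main obstacle is a transcendence problem squarely outside the reach of linear forms in logarithms: one must prove $\overline{\Q}$-linear independence of values, at the algebraic point $t=q^{-1/b}$, of Lambert series indexed by residue classes modulo $b$, jointly with the constants $1,\pi/\log q$ and $\log(q-1)/\log q$. I would try to feed in deep $q$-series transcendence---Nesterenko's theorem on the algebraic independence of the values of the Eisenstein series $E_2,E_4,E_6$ (equivalently of theta series) at algebraic points, and the linear-independence criteria for Lambert-type series of Duverney, Nishioka, Nishioka and Shiokawa---to pin down the $M_a$. The gap is that these results yield transcendence or algebraic independence for a handful of distinguished series, whereas the conjecture demands simultaneous $\Q$-linear independence of the $\varphi(b)/2$ arithmetic-progression-restricted Lambert sums, uniformly in $b$, together with their independence from $1$ and from $\log(q-1)/\log q$ (and one must also exclude the sporadic multiplicatively dependent $q$, e.g.\ $q(q-1)=1$, for which $\log(q-1)/\log q$ degenerates to a rational number). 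Until such an extension is available, the argument yields only the antisymmetric ``half plus one'' statement of Corollary~\ref{C2}; the full count of $\varphi(b)+1$ numbers is exactly what forces the $M_a$ into play, which is why the statement is recorded as a conjecture rather than a theorem.
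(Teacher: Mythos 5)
You were asked about Conjecture A, and the first thing to say is that the paper contains no proof of this statement: it is posed as an open problem, and the authors' strongest result toward it is Theorem \ref{T7}, the lower bound $\dim_{\mathbb{Q}}V_{\mathbb{Q}}(q,b)\geq \frac{\varphi(b)}{2}+1$, obtained exactly by the decomposition you use. Your proposal correctly recognizes this, and as an obstruction analysis it is accurate and matches the paper's partial argument: the respanning of $V_{\mathbb{Q}}(q,b)$ by $1$, the differences $D_a$, and the sums $S_a$ is precisely the first step of the paper's proof of Theorem \ref{T7}; the identity $\gamma_0(q,x)=-\frac{q-1}{\log q}\psi_q(x)$ is consistent with the Kurokawa--Wakayama limit formula and with Equation (\ref{E7}) (one checks $-\frac{q-1}{2}+(q-1)(1-x)=(q-1)\left(\frac{1}{2}-x\right)$ against Equation (\ref{E3})); your Lambert-series rewriting $L(a/b)=(q-1)\sum_{k\equiv a\,(b)}t^{k}/(1-t^{k})$ at $t=q^{-1/b}$ is a correct double-sum rearrangement; and the elementary terms in $S_a$ do cancel, giving $S_a=M_a+2\frac{(q-1)\log(q-1)}{\log q}$ exactly. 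Your handling of the antisymmetric part ($D_a$ via the reflection formula, then Baker through Lemma \ref{L3} plus Okada's Theorem \ref{T3}) is the content of Theorems \ref{T2}, \ref{T6} and Corollary \ref{C2}, and your diagnosis of why the method stops there --- the symmetric values $M_a$ are not logarithms of algebraic numbers, in contrast to the classical Gauss-formula situation exploited by Murty--Saradha --- is a genuine insight that the paper does not spell out.

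Two small cautions. First, your phrase ``the whole conjecture reduces to'' the $\overline{\mathbb{Q}}$-linear independence of $1$, $\pi/\log q$, $\log(q-1)/\log q$ and the $M_a$ overstates the logic slightly: that statement is sufficient (together with Okada's theorem to split the cotangent coefficients) but not necessary, since the conjecture only requires the vanishing of the particular rational coefficient combinations that arise. Second, your own example $q(q-1)=1$, where $\log(q-1)/\log q=-1$, shows the proposed sufficient condition is actually false for some admissible algebraic $q>1$, so any eventual reduction must be formulated per the multiplicative relations of $q$ and $q-1$ (Lemma \ref{L2} is the paper's tool for the generic case). These caveats do not affect your main conclusions: you prove nothing beyond what the paper proves, you claim nothing beyond it, and your account of why the full count $\varphi(b)+1$ is out of reach of linear forms in logarithms is sound.
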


Let us define the vector space
$$V_{\mathbb{Q}}(q,b) = \mathbb{Q}-span~of \left\{1, \gamma_0\left(q, \frac{a}{b}\right): 1 \leq a < b, (a,b)=1 \right\}.$$
The above Conjecture A has an equivalent form which can be stated as follows:
\begin{conjecture}
	Let $q>1$ be any positive algebraic number and $b \geq 3$ be any integer. Then, 
	$$dim_{\mathbb{Q}}V_{\mathbb{Q}}(q,b) = \varphi(b) +1.$$
\end{conjecture}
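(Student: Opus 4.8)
The plan is to establish the dimension formula by pinning down matching upper and lower bounds on $\dim_{\mathbb{Q}} V_{\mathbb{Q}}(q,b)$, isolating precisely the point at which Conjecture A enters. First I would record the size of the spanning set
$$ \mathcal{S} = \left\{1,\ \gamma_0\left(q, \frac{a}{b}\right): 1 \leq a < b,\ (a,b)=1 \right\}. $$
There are exactly $\varphi(b)$ residues $a$ with $1 \le a < b$ and $(a,b)=1$, each contributing one constant $\gamma_0(q, a/b)$, together with the element $1$, so $\mathcal{S}$ has $\varphi(b)+1$ elements. Since $V_{\mathbb{Q}}(q,b)$ is by definition the $\mathbb{Q}$-span of $\mathcal{S}$, and a subspace generated by $n$ vectors has dimension at most $n$, the unconditional upper bound $\dim_{\mathbb{Q}} V_{\mathbb{Q}}(q,b) \le \varphi(b)+1$ follows immediately.

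Next I would compute the dimension by reducing the reverse inequality to linear independence. By the standard characterisation of dimension, a spanning set of size $n$ forms a basis precisely when it is linearly independent, in which case its span has dimension exactly $n$. Applying this with $n = \varphi(b)+1$ gives the forward implication at once: assuming the $\varphi(b)+1$ numbers of $\mathcal{S}$ are $\mathbb{Q}$-linearly independent (Conjecture A), they constitute a basis of $V_{\mathbb{Q}}(q,b)$, whence $\dim_{\mathbb{Q}} V_{\mathbb{Q}}(q,b) = \varphi(b)+1$. The converse is the contrapositive: any nontrivial $\mathbb{Q}$-relation among the elements of $\mathcal{S}$ would let one delete a generator, forcing the span to be generated by fewer than $\varphi(b)+1$ vectors and so dropping the dimension below $\varphi(b)+1$. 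This records the equality $\dim_{\mathbb{Q}} V_{\mathbb{Q}}(q,b) = \varphi(b)+1$ as the exact numerical content of Conjecture A.

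To obtain an unconditional nontrivial lower bound on the dimension, I would import Theorem \ref{T6} and Corollary \ref{C2}, which assert that the $1 + \varphi(b)/2$ numbers
$$ \left\{1,\ \gamma_0\left(q, \frac{a}{b}\right) - \gamma_0\left(q, 1-\frac{a}{b}\right) : 1 \leq a < \frac{b}{2},\ (a,b)=1 \right\} $$
are $\mathbb{Q}$-linearly independent. Each difference is itself a difference of two generators in $\mathcal{S}$, since $1 - a/b = (b-a)/b$ with $1 \le b-a < b$ and $(b-a,b)=(a,b)=1$, so this independent set lies inside $V_{\mathbb{Q}}(q,b)$. Because the involution $a \mapsto b-a$ pairs the $\varphi(b)$ coprime residues into $\varphi(b)/2$ pairs (for $b \ge 3$ there is no fixed point, as $a=b-a$ would force $a=b/2$, which is never coprime to $b$), exactly one representative of each pair satisfies $a < b/2$. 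This yields the unconditional estimate $\dim_{\mathbb{Q}} V_{\mathbb{Q}}(q,b) \geq \varphi(b)/2 + 1$, which is the precise meaning of "at least one more than half of the numbers are linearly independent."

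The main obstacle is closing the gap between the unconditional lower bound $\varphi(b)/2+1$ and the conjectural value $\varphi(b)+1$. The differences used above involve only the antisymmetric combinations $\gamma_0(q,a/b)-\gamma_0(q,1-a/b)$, whose transcendence and independence are accessible via the reflection formula for the digamma function, Okada's criterion, and Baker's theorem, exactly as in Theorem \ref{T2}. Reaching the full dimension would additionally require controlling the symmetric combinations $\gamma_0(q,a/b)+\gamma_0(q,1-a/b)$, in which the Lambert-type series $\sum_{n\ge1} q^{n(1-x)}/[n]_q$ does not cancel and for which no comparable closed form is available; establishing their $\mathbb{Q}$-linear independence, and hence the full Conjecture A, appears to demand transcendence input beyond the present reach of Baker's theory, which is why the dimension formula is recorded as a conjecture rather than proved outright.
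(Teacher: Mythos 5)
Your proposal is correct and aligns with the paper: the statement in question is Conjecture B, which the paper does not prove but merely records as an equivalent reformulation of Conjecture A, and your spanning-set argument establishes exactly that equivalence together with the trivial upper bound $\dim_{\mathbb{Q}} V_{\mathbb{Q}}(q,b) \le \varphi(b)+1$, which is the full provable content here. Your unconditional lower bound of $\varphi(b)/2+1$ via Theorem \ref{T6} and Corollary \ref{C2}, including the fixed-point-free pairing $a \mapsto b-a$ for $b \ge 3$, is precisely the paper's proof of Theorem \ref{T7}, and your closing diagnosis --- that the antisymmetric combinations $\gamma_0(q,a/b)-\gamma_0(q,1-a/b)$ are accessible through the digamma reflection formula, Okada's criterion, and Baker's theory, while the symmetric combinations retain the Lambert-type series and lie beyond these methods --- correctly identifies why the paper leaves the full dimension formula as a conjecture.
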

Finally, we present the following theorem concerning the non-trivial lower bound of the dimension of the space, $V_{\mathbb{Q}}(q,b)$:
\begin{theorem} \label{T7}
	Let $q>1$ be any positive algebraic number and $b \geq 3$ be any integer. Then, at least 
	$$ \frac{\varphi(b)}{2} +1$$
	many numbers of the set $$\left\{1, \gamma_0\left(q, \frac{a}{b}\right): 1 \leq a < b, (a,b)=1 \right\}$$ are linearly independent over $\mathbb{Q}$.\\
	Equivalently, $$dim_{\mathbb{Q}}V_{\mathbb{Q}}(q,b) \geq \frac{\varphi(b)}{2} +1.$$
\end{theorem}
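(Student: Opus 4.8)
The plan is to deduce Theorem \ref{T7} from the linear independence already secured in Corollary \ref{C2}, treating the dimension bound as a short exercise in linear algebra inside the ambient space $V_{\mathbb{Q}}(q,b)$. First I would fix the notation
$$S = \left\{\gamma_0\!\left(q,\tfrac{a}{b}\right) - \gamma_0\!\left(q,1-\tfrac{a}{b}\right): 1 \leq a < \tfrac{b}{2},\ (a,b)=1\right\},$$
and settle a counting point: the totatives of $b$ in the range $1 \leq a < b$ pair off as $a \leftrightarrow b-a$, and for $b \geq 3$ no totative equals $b/2$ (when $b$ is even, $(b/2,b)=b/2>1$). Hence exactly half of the $\varphi(b)$ totatives satisfy $1 \leq a < b/2$, so $|S| = \varphi(b)/2$ and $|\{1\}\cup S| = \varphi(b)/2 + 1$.

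Next I would verify the containment $\{1\}\cup S \subseteq V_{\mathbb{Q}}(q,b)$. By definition $1$ is a generator, so $1 \in V_{\mathbb{Q}}(q,b)$. For each admissible $a$, both $\gamma_0(q,a/b)$ and $\gamma_0(q,1-a/b)=\gamma_0(q,(b-a)/b)$ are themselves among the generators of $V_{\mathbb{Q}}(q,b)$, since $(b-a,b)=(a,b)=1$ and $1 \leq b-a < b$. Consequently each element of $S$ is an integer combination of two generators, and therefore $S \subseteq V_{\mathbb{Q}}(q,b)$.

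Finally, Corollary \ref{C2} states precisely that the $\varphi(b)/2+1$ numbers forming $\{1\}\cup S$ are linearly independent over $\mathbb{Q}$. Since a linearly independent subset of a vector space has cardinality at most the dimension of that space, combining this with the containment just established gives
$$\dim_{\mathbb{Q}} V_{\mathbb{Q}}(q,b) \ \geq\ \bigl|\{1\}\cup S\bigr| \ =\ \frac{\varphi(b)}{2} + 1,$$
which is the asserted bound; the equivalent formulation about $\varphi(b)/2+1$ of the listed numbers being linearly independent then follows immediately.

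I would stress that, granting Corollary \ref{C2}, this argument is entirely formal: all the genuine arithmetic content — the transcendence of the individual differences (Theorem \ref{T2}) and their $\mathbb{Q}$-linear independence (Theorem \ref{T6}), which ultimately rest on Baker's theory and Okada's criterion applied to the cotangent values $\cot(\pi a/b)$ appearing in the closed form of $\gamma_0(q,a/b)$ — has already been absorbed into that corollary. The only step here that demands any care is the bookkeeping: confirming $|S|=\varphi(b)/2$ via the $a \leftrightarrow b-a$ pairing, and checking that every element of $S$ really lies in the $\mathbb{Q}$-span of the generators of $V_{\mathbb{Q}}(q,b)$. There is no further obstacle, and no sharper lower bound is obtainable by this method, since the differences in $S$ only ever see the antisymmetric part of the family $\{\gamma_0(q,a/b)\}$ and thus cannot detect independence beyond $\varphi(b)/2+1$ — closing the gap to the conjectured $\varphi(b)+1$ would require new input on the symmetric combinations $\gamma_0(q,a/b)+\gamma_0(q,1-a/b)$.
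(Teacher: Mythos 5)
Your proposal is correct and follows essentially the same route as the paper: both reduce the dimension bound to Corollary \ref{C2} (linear independence of $1$ together with the $\varphi(b)/2$ differences), combined with the observation that these numbers lie in $V_{\mathbb{Q}}(q,b)$. Your write-up is in fact slightly more careful than the paper's, since you make explicit the counting $|S|=\varphi(b)/2$ via the pairing $a \leftrightarrow b-a$ and the containment $\{1\}\cup S \subseteq V_{\mathbb{Q}}(q,b)$, whereas the paper gestures at a spanning decomposition into symmetric and antisymmetric parts that is not actually needed for the lower bound.
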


\section{\bf Notations and Preliminaries}
This section is dedicated to studying the notations and definitions relevant to the $q$-series, along with other essential results that will play a pivotal role in subsequent sections. \\
Let $a$ be a complex number. The $q$-analogue of $a$ is expressed by:\\
\begin{align*}
[a]_q = \frac{q^a - 1}{q - 1}, ~~~~~ q \neq 1.
\end{align*}
Additionally, the $q$-shifted factorial of $a$ is defined as:
\begin{align*}
(a;q)_0&=1, ~~~~~ (a;q)_n = \displaystyle\prod_{m=0}^{n-1} (1-aq^n), ~~~~~ n=1,2, \ldots\\
(a;q)_{\infty}& =\lim_{n\rightarrow\infty}(a;q)_n = \displaystyle\prod_{n=0}^{\infty} (1-aq^n).
\end{align*}
Furthermore, the $q$-analogue of the Lambert series is represented as:
\begin{align*}
\mathscr{L}_q(s,x) = \sum_{k=1}^{\infty}\frac{k^s q^{kx}}{1-q^k},~~ s\in \mathbb{C},
\end{align*}
where $|q| < 1$ and $x>0$.
Next, we explore some significant $q$-analogues of classical functions, commencing with the $q$-analogue of the gamma function introduced by Jackson \cite{J} as follows:
\begin{align*}
\Gamma_q(x) = \frac{(q;q)_{\infty} (1-q)^{1-x}}{(q^x;q)_{\infty}}, ~~~ \text{for} ~~ 0<q<1 
\end{align*}
and
\begin{align*}
\Gamma_q(x) = \frac{q^{\binom{x}{2}}(q^{-1};q^{-1})_{\infty} (q-1)^{1-x}}{(q^{-x};q^{-1})_{\infty}},~~~ \text{for} ~~ q>1.
\end{align*}
In the classical case, the logarithmic derivative of the classical gamma function is known as the digamma function. Likewise, a $q$-analogue of the digamma function is defined as the logarithmic derivative of a $q$-analogue of the gamma function. As a result, we have
\begin{align*}
\psi_q(x)  = \frac{d}{dx}\log \Gamma_q(x).
\end{align*}
Hence,
\begin{align*}
\psi_q(x)= -\log (1-q) + \log q \sum_{n \geq 0}\frac{q^{n+x}}{1-q^{n+x}}, ~~~~~ 0<q<1
\end{align*}
and
\begin{align}
\psi_q(x)& = -\log (q-1) + \log q\Bigg(x - \frac{1}{2} -  \sum_{n \geq 0}\frac{q^{-n-x}}{1-q^{-n-x}}\Bigg) \nonumber\\
& = -\log (q-1) + \log q\Bigg(x - \frac{1}{2} -  \sum_{n \geq 1}\frac{q^{-nx}}{1-q^{-n}}\Bigg), ~~~~~ q>1.\label{E3}
\end{align}
Further, we examine certain results that play a pivotal role in establishing our results. In 1970, S.\,Chowla \cite{SC} established the following theorem concerning the linear independence of cotangent values at rational arguments.
\begin{theorem}
	Let $p$ be a prime. Then, the $\frac{1}{2}(p-1)$ real numbers $\cot(\pi a/p)$, $ a = 1, \ldots ,\frac{1}{2}(p-1)$, are linearly independent over the field of rational numbers, $\mathbb{Q}$.
\end{theorem}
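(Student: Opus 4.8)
The plan is to assume a rational relation $\sum_{a=1}^{(p-1)/2}c_a\cot(\pi a/p)=0$ with $c_a\in\mathbb{Q}$ and to deduce that every $c_a=0$, by combining the Galois action on cyclotomic fields, orthogonality of Dirichlet characters, and the non-vanishing of $L(1,\chi)$. First I would symmetrize: using $\cot(\pi(p-a)/p)=-\cot(\pi a/p)$, extend the coefficients to an odd function $f$ on $(\mathbb{Z}/p\mathbb{Z})^{\times}$ by setting $f(a)=c_a$ and $f(p-a)=-c_a$ for $1\le a\le(p-1)/2$, so the hypothesis becomes $\sum_{a=1}^{p-1}f(a)\cot(\pi a/p)=0$.

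Next I would generate many relations from this single one via the Galois group. Each $\cot(\pi a/p)$ lies in $\mathbb{Q}(\zeta_{4p})$, and for $t\equiv 1\ (\mathrm{mod}\ 4)$ with $\gcd(t,p)=1$ the automorphism $\sigma_t\colon\zeta_{4p}\mapsto\zeta_{4p}^{t}$ fixes $i$ and sends $\zeta_p\mapsto\zeta_p^{t}$, so that $\sigma_t(\cot(\pi a/p))=\cot(\pi at/p)$ (the period $\pi$ of $\cot$ absorbs the reduction of $at$ modulo $p$). Applying $\sigma_t$ to the rational relation and reindexing yields $\sum_{b}f(t^{-1}b)\cot(\pi b/p)=0$; since by the Chinese Remainder Theorem the residues $t\bmod p$ with $t\equiv 1\ (\mathrm{mod}\ 4)$ exhaust $(\mathbb{Z}/p\mathbb{Z})^{\times}$, I obtain one such relation for every $t\in(\mathbb{Z}/p\mathbb{Z})^{\times}$.

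I would then diagonalize using characters. The space of odd $\mathbb{C}$-valued functions on $(\mathbb{Z}/p\mathbb{Z})^{\times}$ has dimension $(p-1)/2$ and is spanned by the $(p-1)/2$ odd Dirichlet characters modulo $p$, so I write $f=\sum_{\chi(-1)=-1}\hat f(\chi)\chi$. The translated relations become $\sum_{\chi}\hat f(\chi)\,\chi(t)^{-1}S(\chi)=0$ for all $t$, where $S(\chi):=\sum_{a=1}^{p-1}\chi(a)\cot(\pi a/p)$; since the distinct characters $t\mapsto\chi(t)^{-1}$ are linearly independent, this forces $\hat f(\chi)S(\chi)=0$ for every odd $\chi$.

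The crux is to show $S(\chi)\ne 0$. Here I would use the partial-fraction (symmetric-sum) expansion $\pi\cot(\pi a/p)=p\sum_{k\equiv a\,(p)}1/k$ to obtain $S(\chi)=\tfrac{p}{\pi}\sum_{k\ne 0}\chi(k)/k$, which for odd $\chi$ collapses to $S(\chi)=\tfrac{2p}{\pi}L(1,\chi)$. The theorem then follows from the classical non-vanishing $L(1,\chi)\ne 0$ for non-principal $\chi$: one gets $\hat f(\chi)=0$ for all odd $\chi$, hence $f\equiv 0$ and every $c_a=0$. I expect the main obstacle to be precisely this last step --- establishing $S(\chi)\ne 0$ --- since it is not formal but rests on the non-vanishing of Dirichlet $L$-values at $s=1$ (the same input underlying Dirichlet's theorem on primes in arithmetic progressions), with the real (quadratic) characters being the delicate case.
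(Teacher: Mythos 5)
The paper contains no proof of this statement to compare against: it is Chowla's 1970 theorem, quoted verbatim from \cite{SC} as a preliminary (and in fact the paper's own arguments invoke Okada's stronger Theorem~2.5 rather than this one). Judged on its own merits, your argument is correct and is, in substance, the classical proof of Chowla's theorem. The symmetrization to an odd function on $(\mathbb{Z}/p\mathbb{Z})^{\times}$ is right, since $\cot(\pi(p-a)/p)=-\cot(\pi a/p)$ turns your hypothesis into $\sum_{a=1}^{p-1}f(a)\cot(\pi a/p)=0$ up to a factor of $2$. The Galois step is handled carefully at exactly the point where it could go wrong: writing $\cot(\pi a/p)=i(\zeta_p^{a}+1)/(\zeta_p^{a}-1)\in\mathbb{Q}(\zeta_{4p})$, one needs $t\equiv 1 \pmod 4$ so that $\sigma_t$ fixes $i$, and your CRT remark correctly shows this still realizes every residue class $t \bmod p$. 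The diagonalization is standard: odd characters span the $(p-1)/2$-dimensional space of odd functions, and linear independence of distinct characters (or orthogonality) yields $\hat f(\chi)S(\chi)=0$ for each odd $\chi$. The crux, $S(\chi)\neq 0$, is correctly reduced via the symmetric partial-fraction expansion $\pi\cot(\pi z)=\lim_{N\to\infty}\sum_{|k|\le N}(z+k)^{-1}$ to $S(\chi)=\frac{2p}{\pi}L(1,\chi)$ --- you rightly insist on symmetric summation, since the bilateral series only converges conditionally --- and the classical non-vanishing $L(1,\chi)\neq 0$ finishes it; for the odd characters mod $p$ arising here, the one real (quadratic) case, $p\equiv 3\pmod 4$, is settled by Dirichlet's class number formula, which gives $L(1,\chi)>0$. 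This is essentially how the result is proved in the literature (the same mechanism, pushed through for derivatives of the cotangent and composite moduli, underlies Okada's Theorem~2.5), so your proposal is a faithful reconstruction of the standard argument rather than a new route.
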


Then, in 1981, T.\,Okada \cite{TO} generalized Chowla’s theorem to encompass all derivatives of cotangent values. The formulation of his result is outlined below:

\begin{theorem} \label{T3}
	Let $k$ and $q$ be positive integers with $k \geq 1$ and $q > 2$. Let $T$ be a set of $\varphi(q)/2$ representations $\bmod~q$ such that the union $T \cup (-T)$ constitutes a complete set of co-prime residue classes $\bmod~q$. Then, the following set of real numbers: 
	\begin{align*}
		\frac{d^{k-1}}{dz^{k-1}} \cot (\pi z)|_{z = a/q},~~~~ a \in T
	\end{align*}
	is linearly independent over $\mathbb{Q}$.
\end{theorem}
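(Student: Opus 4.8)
The plan is to convert the assertion into a statement about Dirichlet $L$-functions through the Hurwitz zeta function, and then to play the Galois action on a cyclotomic field off against orthogonality of characters. First I would begin from the partial-fraction expansion $\pi\cot(\pi z)=\sum_{n\in\mathbb{Z}}1/(z+n)$ and differentiate it $k-1$ times, so that at $z=a/q$
\[
\frac{d^{k-1}}{dz^{k-1}}\cot(\pi z)\Big|_{z=a/q}=\frac{(-1)^{k-1}(k-1)!}{\pi}\,\beta_a,\qquad \beta_a:=\sum_{n\in\mathbb{Z}}\frac{1}{(a/q+n)^{k}}=\zeta(k,\tfrac{a}{q})+(-1)^k\zeta(k,1-\tfrac{a}{q}).
\]
By the Hurwitz--Lipschitz formula, $\beta_a$ equals $\pi^{k}$ times an algebraic number $\gamma_a$ lying in the cyclotomic field generated by $\omega=e^{2\pi i/q}$ (a rational combination of the powers $\omega^{aj}$). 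Dividing the putative relation by the common nonzero factor $(-1)^{k-1}(k-1)!\,\pi^{k-1}$, a $\mathbb{Q}$-linear relation among the stated numbers becomes exactly a relation $\sum_{a\in T}c_a\gamma_a=0$ with $c_a\in\mathbb{Q}$ inside a cyclotomic field.

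Second, I would exploit the reflection symmetry. A short reindexing gives $\beta_{-a}=(-1)^k\beta_a$, hence $\gamma_{-a}=(-1)^k\gamma_a$. Starting from $\sum_{a\in T}c_a\gamma_a=0$, I extend the coefficients to all reduced residues by $\tilde c_a=c_a$ for $a\in T$ and $\tilde c_{-a}=(-1)^k c_a$; since by hypothesis $T\cup(-T)$ is a complete set of coprime residues mod $q$, this is well defined and yields $\sum_{(a,q)=1}\tilde c_a\gamma_a=0$, where now $\tilde c$ has parity $(-1)^k$, i.e. $\tilde c_{-a}=(-1)^k\tilde c_a$.

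Third comes the Galois-plus-characters step. Since $\gamma_a$ is a rational combination of the $\omega^{aj}$, the automorphism $\sigma_t$ of the cyclotomic field sending $\omega\mapsto\omega^{t}$ satisfies $\sigma_t(\gamma_a)=\gamma_{ta}$ for each $t\in(\mathbb{Z}/q\mathbb{Z})^{*}$; applying every $\sigma_t$ to the relation produces $\sum_a\tilde c_a\gamma_{ta}=0$ for all $t$. Writing $\gamma_a=\sum_\chi\widehat\gamma(\chi)\chi(a)$ over the Dirichlet characters mod $q$, this family of equations reads $\sum_\chi\widehat\gamma(\chi)\widehat{\tilde c}(\chi)\chi(t)=0$ for all $t$, with $\widehat{\tilde c}(\chi)=\sum_a\tilde c_a\chi(a)$; by linear independence of the characters in $t$, we get $\widehat\gamma(\chi)\,\widehat{\tilde c}(\chi)=0$ for every $\chi$. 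A direct computation of the character transform gives $\sum_{(a,q)=1}\bar\chi(a)\beta_a=q^{k}L(k,\bar\chi)\bigl(1+(-1)^k\chi(-1)\bigr)$, so $\widehat\gamma(\chi)\neq0$ exactly when $\chi(-1)=(-1)^k$ and $L(k,\bar\chi)\neq0$. For characters of the opposite parity, $\widehat{\tilde c}(\chi)$ already vanishes because $\tilde c$ has parity $(-1)^k$; for characters of parity $(-1)^k$ the nonvanishing of $\widehat\gamma(\chi)$ forces $\widehat{\tilde c}(\chi)=0$. Thus $\widehat{\tilde c}(\chi)=0$ for all $\chi$, and Fourier inversion on $(\mathbb{Z}/q\mathbb{Z})^{*}$ gives $\tilde c_a=0$, in particular $c_a=0$ for all $a\in T$, which is the desired linear independence.

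The main obstacle is the nonvanishing of the values $L(k,\chi)$ at the relevant characters. For $k\geq2$ this is immediate from the convergent Euler product, so the delicate case is $k=1$: there only odd characters occur (the principal character is even and is excluded automatically by parity), so what is needed is precisely $L(1,\chi)\neq0$ for all non-principal $\chi$, which is Dirichlet's classical theorem. A secondary technical point is the cyclotomic bookkeeping: one must make the identification $\beta_a=\pi^{k}\gamma_a$ rigorous via the Hurwitz formula and track the factor $i^{k}$, so that when $k$ is odd one works in the field obtained by adjoining $i$ and chooses the automorphisms $\sigma_t$ to fix $i$ while realizing the permutation $a\mapsto ta$ (possible by the Chinese Remainder Theorem); one must also reduce imprimitive $\chi$ to their primitive counterparts, where $L(k,\chi)$ differs from $L(k,\chi^{*})$ only by a finite product of nonzero Euler factors $1-\chi^{*}(p)p^{-k}$.
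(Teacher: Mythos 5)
This statement is quoted in the paper as Okada's theorem (reference \cite{TO} in the bibliography) and is used as a black box in the proofs of Theorems \ref{T6} and \ref{T7}; the paper contains no proof of it, so there is nothing internal to compare against. Judged on its own merits, your argument is essentially the classical proof of Okada's result, and its architecture is sound: the reduction of $\cot^{(k-1)}(\pi z)|_{z=a/q}$ to the symmetrized lattice sum $\beta_a$, the extension of the coefficients from $T$ to all reduced residues using $\beta_{-a}=(-1)^k\beta_a$ (well defined precisely because $T\cup(-T)$ is a complete reduced residue system), the Galois conjugation $\sigma_t(\gamma_a)=\gamma_{ta}$ producing the full orbit of relations, character orthogonality yielding $\widehat\gamma(\chi)\,\widehat{\tilde c}(\chi)=0$, the parity filter killing $\widehat{\tilde c}(\chi)$ for opposite-parity characters (including the principal character when $k$ is odd), and the nonvanishing $L(k,\bar\chi)\neq 0$ (Euler product for $k\geq 2$, Dirichlet for $k=1$) forcing $\widehat{\tilde c}(\chi)=0$ for same-parity ones. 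All of these steps check out, including the reduction of imprimitive characters, where the finite Euler factors $1-\chi^{*}(p)p^{-k}$ are indeed nonzero.

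Two technical points deserve patching, though neither is fatal. First, at $k=1$ your displayed identity $\beta_a=\zeta(1,a/q)+(-1)\zeta(1,1-a/q)$ is meaningless as written, since both Hurwitz values diverge; $\beta_a$ must be defined by symmetric (principal value) summation throughout, the character computation $\sum_{(a,q)=1}\bar\chi(a)\beta_a=q^{k}L(k,\bar\chi)\bigl(1+(-1)^k\chi(-1)\bigr)$ then requires a conditional-convergence justification, and for $\chi=\chi_0$, $k=1$, the right-hand side is formally $0\cdot\infty$ --- here one should observe directly that $\beta_{q-a}=-\beta_a$ makes $\widehat\gamma(\chi_0)=0$, which together with your parity remark closes the case. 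Second, when $4\mid q$ and $k$ is odd, your appeal to the Chinese Remainder Theorem fails: $i\in\mathbb{Q}(\omega)$, and any automorphism with $\omega\mapsto\omega^{t}$, $t\equiv 3\pmod 4$, sends $i\mapsto -i$, so $\sigma_t(\gamma_a)=i^{(t-1)k}\gamma_{ta}=\pm\gamma_{ta}$ rather than $\gamma_{ta}$; but the sign is a global factor independent of $a$ and divides out of the relation, or one can avoid the issue altogether by noting that the parity of the cotangent polynomial gives $\gamma_a=i^{k}\delta_a$ with $\delta_a=R_k(\omega^a)$, $R_k\in\mathbb{Q}(X)$, cancelling $i^k$ from the relation and letting $\sigma_t$ act inside $\mathbb{Q}(\omega)$ where $\sigma_t(\delta_a)=\delta_{ta}$ honestly holds. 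As a side remark, for $k=1$ only odd characters survive your parity filter, and there $L(1,\chi)\neq 0$ is elementary (it is a nonzero algebraic multiple of $\pi$, via $B_{1,\bar\chi}\neq 0$), so full Dirichlet nonvanishing is heavier machinery than strictly needed.
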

In the classical context, Baker's theorem (see ref.\,\cite{AB}) assumes a crucial role in formulating assertions concerning the logarithms of algebraic numbers. This significance becomes apparent through the following statement:
\begin{theorem} \label{T4}
If $\alpha_1,\alpha_2,\ldots,\alpha_n$ are non-zero algebraic numbers such that $\log \alpha_1, \ldots, \log \alpha_n$ are linearly independent over the field of rational numbers, then $1, \log \alpha_1, \ldots, \log \alpha_n$ are linearly independent over the field of algebraic numbers.
\end{theorem}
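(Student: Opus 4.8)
The statement is Baker's theorem on linear forms in logarithms, and the only known route is the Gelfond--Baker method of auxiliary functions; I would follow Baker's original argument. The plan is to argue by contradiction. If the conclusion fails then, since $\log\alpha_1,\dots,\log\alpha_n$ are already $\Q$-linearly independent, any nontrivial $\overline{\mathbb{Q}}$-relation among $1,\log\alpha_1,\dots,\log\alpha_n$ must involve at least one logarithm nontrivially and can therefore be solved for one of them, say
\begin{align*}
\log\alpha_n = \beta_0 + \beta_1\log\alpha_1 + \cdots + \beta_{n-1}\log\alpha_{n-1},
\end{align*}
with all $\alpha_i$ and $\beta_j$ lying in a fixed number field $K$. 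Everything that follows aims at contradicting this identity.

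First I would construct an auxiliary function of $n-1$ complex variables of the shape
\begin{align*}
\Phi(z_1,\dots,z_{n-1}) = \sum_{\lambda_0=0}^{L}\cdots\sum_{\lambda_n=0}^{L} p(\lambda_0,\dots,\lambda_n)\, z_1^{\lambda_0}\, e^{\beta_0\lambda_n z_1}\prod_{j=1}^{n-1}\alpha_j^{(\lambda_j+\beta_j\lambda_n)z_j},
\end{align*}
where the unknown coefficients $p(\lambda)\in K$ are to be chosen as algebraic integers, not all zero, of controlled height. The decisive feature is that on the diagonal $z_1=\cdots=z_{n-1}=\ell$ the factor $\prod_j\alpha_j^{\beta_j\lambda_n\ell}$, after using the displayed relation to rewrite $\sum_j\beta_j\log\alpha_j$ as $\log\alpha_n-\beta_0$, collapses together with the $e^{\beta_0\lambda_n z_1}$ factor so that $\Phi(\ell,\dots,\ell)=\sum_\lambda p(\lambda)\,\ell^{\lambda_0}\prod_{j=1}^{n}\alpha_j^{\lambda_j\ell}$ becomes an algebraic number in $K$. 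The same holds for the partial derivatives at integer diagonal points. I would then invoke Siegel's lemma, the pigeonhole bound guaranteeing a small nonzero integral solution of an underdetermined linear system, to force $\Phi$ together with all its partial derivatives up to a prescribed order to vanish at the points $\ell=1,\dots,h$; this is possible because the number of coefficients $p(\lambda)$, of order $(L+1)^{n+1}$, is chosen to exceed the number of vanishing conditions imposed.

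The heart of the argument is the extrapolation step, carried out by induction. The analytic input is that $\Phi$ is entire of finite order, so whenever it vanishes to high order at many diagonal points the maximum-modulus principle, applied through the Hermite interpolation formula and a Schwarz-type lemma on discs of growing radius, forces the values of $\Phi$ and its derivatives to be extraordinarily small at further integer points. The arithmetic input is the converse Liouville-type lower bound: after clearing a common denominator, each such value is an algebraic integer of $K$, hence by the product formula together with bounds on its conjugates is either zero or bounded below in absolute value by an explicit quantity. Calibrating the parameters $L$, $h$ and the order of vanishing so that the analytic upper bound defeats this arithmetic lower bound, I would conclude that the values must in fact vanish, thereby enlarging the set of points and the order to which $\Phi$ vanishes; iterating this mechanism propagates the vanishing over an enormous grid.

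Finally, the massive vanishing of a nonzero exponential polynomial is incompatible with the $\Q$-linear independence of $\log\alpha_1,\dots,\log\alpha_n$: a Vandermonde determinant / zero-estimate argument shows that such vanishing would force all coefficients $p(\lambda)$ to be zero, contradicting the nonzero choice delivered by Siegel's lemma. This contradiction refutes the assumed relation and establishes the theorem. I expect the extrapolation to be the main obstacle, since balancing analytic smallness against the arithmetic lower bound demands a simultaneous and delicate calibration of all parameters; moreover the genuinely multivariable construction — the device by which the linear relation among the logarithms eliminates one variable — is precisely what makes the method succeed for $n\ge 3$, where the classical single-variable Gelfond--Schneider technique breaks down.
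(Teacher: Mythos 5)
The paper does not prove this statement at all: it is Baker's theorem, quoted as Theorem \ref{T4} directly from Baker's monograph \cite{AB} (Theorem 2.1 there) and used as a black box, so there is no internal proof to compare yours against. Judged against the canonical source, your outline faithfully reproduces Baker's own argument: the reduction of a putative $\overline{\mathbb{Q}}$-relation to $\log\alpha_n=\beta_0+\beta_1\log\alpha_1+\cdots+\beta_{n-1}\log\alpha_{n-1}$, the auxiliary function with the factor $z_1^{\lambda_0}e^{\beta_0\lambda_n z_1}\prod_{j=1}^{n-1}\alpha_j^{(\lambda_j+\beta_j\lambda_n)z_j}$ (this is exactly Baker's construction, and the diagonal collapse $\Phi(\ell,\dots,\ell)=\sum_\lambda p(\lambda)\,\ell^{\lambda_0}\prod_{j=1}^{n}\alpha_j^{\lambda_j\ell}$ is precisely the mechanism by which the assumed relation is exploited), Siegel's lemma with $(L+1)^{n+1}$ unknowns against fewer vanishing conditions, extrapolation via Hermite interpolation and a Schwarz lemma played off against Liouville-type lower bounds, and the terminal zero estimate, which is indeed the one place where the $\mathbb{Q}$-linear independence of the logarithms enters, making the frequencies $\sum_j\lambda_j\log\alpha_j$ pairwise distinct so that the Vandermonde argument kills all $p(\lambda)$.

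Two caveats. First, what you have written is a correct roadmap, not a proof: the entire difficulty of Baker's theorem lives in the quantitative calibration of $L$, $h$ and the vanishing orders during the extrapolation, which you describe qualitatively but do not execute; as it stands the argument could not be certified from the page, though you flag this honestly, and expecting those estimates in a blind attempt would be unreasonable. Second, two small inaccuracies: your claim that the Gelfond--Baker method is ``the only known route'' is overstated (Wüstholz's analytic subgroup theorem yields the result by a different, more structural path), and the $\mathbb{Q}$-linear independence hypothesis is not what forces a nontrivial relation to involve a logarithm --- a relation involving only $1$ is trivially impossible since $\beta_0\cdot 1=0$ forces $\beta_0=0$; the hypothesis is genuinely needed only at the final zero-estimate stage.
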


In 2010, Murty and Saradha \cite{MS} (also see \cite{MM}) proved an important consequence of this result, which is given as follows:

\begin{lemma} \label{L1}
		Let $\alpha_1, \ldots, \alpha_n$ be positive algebraic numbers. If $c_0, c_1, \ldots, c_n$ are algebraic numbers with $c_0 \neq 0$, then $$c_0 \pi + \sum_{j=1}^n c_j \log \alpha_j$$ is a transcendental number and hence non-zero.
\end{lemma}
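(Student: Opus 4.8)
The plan is to deduce the statement from the strong form of Baker's theorem recorded as Theorem \ref{T4}, the only extra ingredient being a device to absorb the term $c_0\pi$ into the logarithmic framework. The key observation is that, on the principal branch, $\log(-1)=i\pi$, so that $\pi=-i\log(-1)$ and hence
\begin{align*}
\Lambda := c_0\pi + \sum_{j=1}^n c_j\log\alpha_j = (-ic_0)\log(-1) + \sum_{j=1}^n c_j\log\alpha_j.
\end{align*}
Since $-1,\alpha_1,\ldots,\alpha_n$ are nonzero algebraic numbers and $-ic_0,c_1,\ldots,c_n$ are algebraic with $-ic_0\neq 0$ (as $c_0\neq 0$), this exhibits $\Lambda$ as a $\overline{\mathbb{Q}}$-linear combination of logarithms of nonzero algebraic numbers, where $\overline{\mathbb{Q}}$ denotes the field of algebraic numbers. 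It therefore suffices to show that such a combination is transcendental; transcendence then forces $\Lambda\neq 0$, since $0$ is algebraic.

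First I would argue by contradiction, assuming $\Lambda$ is algebraic. Then
\begin{align*}
(-\Lambda)\cdot 1 + (-ic_0)\log(-1) + \sum_{j=1}^n c_j\log\alpha_j = 0
\end{align*}
is a $\overline{\mathbb{Q}}$-linear relation among $1,\log(-1),\log\alpha_1,\ldots,\log\alpha_n$. Because Theorem \ref{T4} applies only when the logarithms involved are linearly independent over $\mathbb{Q}$, the next step is to pass to such a subfamily. I would choose a maximal $\mathbb{Q}$-linearly independent subset $\{\log\alpha_{i_1},\ldots,\log\alpha_{i_m}\}$ of $\{\log\alpha_1,\ldots,\log\alpha_n\}$ and rewrite every remaining $\log\alpha_j$ as a rational-coefficient combination of these; this converts the relation above into one of the form
\begin{align*}
(-\Lambda)\cdot 1 + (-ic_0)\log(-1) + \sum_{k=1}^m \gamma_k\log\alpha_{i_k} = 0,
\end{align*}
with algebraic coefficients $\gamma_k$ and with the coefficient of $\log(-1)$ still equal to the nonzero number $-ic_0$.

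The decisive point is that $\log(-1)=i\pi$ is purely imaginary while every $\log\alpha_{i_k}$ is real, the $\alpha_j$ being positive. Hence in any $\mathbb{Q}$-linear relation $q_0\log(-1)+\sum_k q_k\log\alpha_{i_k}=0$ with rationals $q_0,q_k$, comparing imaginary parts gives $q_0\pi=0$, so $q_0=0$, and then $q_k=0$ by the chosen independence. Thus $\log(-1),\log\alpha_{i_1},\ldots,\log\alpha_{i_m}$ are themselves linearly independent over $\mathbb{Q}$. Theorem \ref{T4} then asserts that $1,\log(-1),\log\alpha_{i_1},\ldots,\log\alpha_{i_m}$ are linearly independent over $\overline{\mathbb{Q}}$, which contradicts the displayed relation since its coefficient $-ic_0$ on $\log(-1)$ is nonzero. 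Therefore $\Lambda$ cannot be algebraic; it is transcendental, and in particular nonzero.

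I expect the only real subtlety — rather than a genuine obstacle — to be the reduction to a $\mathbb{Q}$-linearly independent family of logarithms together with the verification that $\log(-1)$ survives this reduction with its nonzero coefficient intact. This is precisely where the positivity (hence reality) of the $\alpha_j$ is used and where the hypothesis $c_0\neq 0$ becomes essential; everything else is the standard passage from the linear-independence formulation of Baker's theorem to its ``zero or transcendental'' consequence.
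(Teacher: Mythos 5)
Your proof is correct. One point of context: the paper itself gives no proof of Lemma \ref{L1} --- it is quoted from Murty and Saradha \cite{MS} --- so the natural comparison is with the paper's proof of the closely related Lemma \ref{L3}, which uses exactly your key device: writing $\pi = -i\log(-1)$ and invoking Baker's theorem (Theorem \ref{T4}). Within that shared framework you differ in two respects. First, since Lemma \ref{L1} does not assume the $\log\alpha_j$ are linearly independent over $\mathbb{Q}$, you correctly insert the reduction to a maximal $\mathbb{Q}$-linearly independent subfamily before applying Baker --- a step Lemma \ref{L3} can skip because independence is hypothesized there, and a step that is genuinely needed for the lemma as stated. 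Second, to show that $\log(-1)$ joins the real logarithms in a $\mathbb{Q}$-linearly independent family, you compare imaginary parts (using that the $\alpha_j$ are positive, so their logarithms are real), whereas the paper's Lemma \ref{L3} proof exponentiates the putative relation $b_0\log(-1)+\sum_k b_k\log\alpha_k=0$ to get $\prod_k \alpha_k^{2b_k}=1$ and appeals to multiplicative independence. The two devices are equivalent here; yours is arguably cleaner, since it uses positivity directly and avoids the slight detour through multiplicative independence (and also covers transparently the degenerate case in which all $\alpha_j=1$ and the maximal independent subfamily is empty). Your handling of the nonvanishing coefficient $-ic_0$ through the reduction is also sound, since the rewriting involves only the real logarithms and never touches $\log(-1)$.
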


Further, in 2014, Chatterjee and Murty \cite{CM} recorded another variation of this lemma, which is stated as follows:

\begin{lemma} \label{L2}
		Let $\alpha_1, \ldots, \alpha_n$ be positive units in a number field of degree $> 1$. Let $r$ be a positive rational number unequal to $1$. If $c_0, c_1, \ldots, c_n$ are algebraic numbers with $c_0 \neq 0$ and $d$ is an integer, then $$c_0 \pi + \sum_{j=1}^n c_j \log \alpha_j + d \log r$$ is a transcendental number and hence non-zero.
\end{lemma}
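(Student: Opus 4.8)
The plan is to deduce this lemma directly from Lemma~\ref{L1} by absorbing the extra term $d\log r$ into the sum of logarithms, after which the hypotheses of Lemma~\ref{L1} are met verbatim. The point is that $r$ is a positive rational number, so for every integer $d$ the quantity $r^d$ is again a positive rational, hence a positive algebraic number, and $d\log r=\log\!\left(r^d\right)$ because $r>0$. Thus the term $d\log r$ is itself the logarithm of a positive algebraic number carrying the algebraic coefficient $1$, which is exactly the shape of the remaining summands.

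Concretely, I would set $\beta_j=\alpha_j$ for $1\le j\le n$, put $\beta_{n+1}=r^d$, and take $c_{n+1}=1$. Each positive unit $\alpha_j$ is in particular a positive algebraic number, and $\beta_{n+1}=r^d$ is positive algebraic as well, so $\beta_1,\dots,\beta_{n+1}$ are positive algebraic numbers, while $c_0,c_1,\dots,c_n,c_{n+1}$ are algebraic with $c_0\neq0$. Rewriting the given expression gives
\[
c_0\pi+\sum_{j=1}^{n}c_j\log\alpha_j+d\log r
   = c_0\pi+\sum_{j=1}^{n+1}c_j\log\beta_j,
\]
which is precisely a linear form of the type treated in Lemma~\ref{L1}. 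Since $c_0\neq0$, that lemma declares the right-hand side transcendental, and in particular nonzero, which is the assertion. When $d=0$ the extra term is simply absent, and one applies Lemma~\ref{L1} to $\alpha_1,\dots,\alpha_n$ directly.

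I expect essentially no obstacle here: all of the genuine difficulty is already packaged inside Lemma~\ref{L1} (equivalently, inside Baker's theorem, Theorem~\ref{T4}), and what remains is only the bookkeeping observation that an integer power of a positive rational is a positive algebraic number. It is worth remarking that the transcendence conclusion does not really use the hypotheses that the $\alpha_j$ be units in a field of degree $>1$ or that $r\neq1$; these are retained because they match the situation in which the lemma will later be invoked (fundamental units together with a rational factor) and because $r\neq1$ together with $d\neq0$ guarantees $r^d\neq1$, so that the rational contribution is genuinely present.

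As a robustness check I would also keep in mind a self-contained derivation that bypasses Lemma~\ref{L1} and appeals to Theorem~\ref{T4} directly. Writing $\pi=-i\log(-1)$, consider the numbers $\log(-1),\log\beta_1,\dots,\log\beta_{n+1}$, where the $\beta_j$ are the positive algebraic numbers above. The logarithms $\log\beta_j$ are real, whereas $\log(-1)=i\pi$ is purely imaginary, so in any $\mathbb{Q}$-linear relation among $\log(-1)$ and these real logarithms the coefficient of $\log(-1)$ must vanish; hence a maximal $\mathbb{Q}$-linearly independent subset of the $\log\beta_j$, augmented by $\log(-1)$, stays $\mathbb{Q}$-linearly independent. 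Baker's theorem then renders $1$ together with this set linearly independent over $\overline{\mathbb{Q}}$, and expressing the given form in this basis with $c_0\neq0$ forbids it from being algebraic. This second route confirms the reduction and pinpoints exactly where the hypothesis $c_0\neq0$ is used.
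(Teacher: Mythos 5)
Your proposal is correct, but there is nothing in the paper to compare it against: Lemma~\ref{L2} is quoted from Chatterjee and Murty \cite{CM} and the paper supplies no proof of it (the only lemma proved internally is Lemma~\ref{L3}). Your reduction to Lemma~\ref{L1} is valid --- $r^d$ is a positive algebraic number and $d\log r=\log\left(r^d\right)$ for $r>0$ --- and in fact it can be made even shorter: there is no need to absorb the exponent at all, since an integer $d$ is itself an algebraic number, so one may simply take $\alpha_{n+1}=r$ and $c_{n+1}=d$ and invoke Lemma~\ref{L1} directly with $n+1$ logarithms. You are also right, and it is worth having said explicitly, that with $c_0\neq 0$ assumed the hypotheses that the $\alpha_j$ be units in a number field of degree $>1$ and that $r\neq 1$ do no work in this statement; in the source \cite{CM} such hypotheses matter for companion results where the coefficient of $\pi$ may vanish and non-vanishing must instead come from the multiplicative independence of units from nontrivial rational powers, a situation your reduction does not (and need not) address. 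Your fallback derivation straight from Baker's theorem (Theorem~\ref{T4}) --- writing $\pi=-i\log(-1)$, noting that $\log(-1)$ is purely imaginary while the $\log\beta_j$ are real so the coefficient of $\log(-1)$ in any rational relation must vanish, then passing to a maximal $\mathbb{Q}$-linearly independent subset and reading off the contradiction from $c_0\neq 0$ --- is sound and is essentially the same mechanism the paper uses in its proof of Lemma~\ref{L3}.
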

We further state the modified version of Lemma \ref{L1}. The statement of the result is as follows:
\begin{lemma} \label{L3}
	Let $\alpha_1, \ldots, \alpha_n$ be positive algebraic numbers such that $\log \alpha_1, \ldots, \log \alpha_n$ are linearly independent over $\mathbb{Q}$. Then, the set of numbers
	$$ \{ 1, \pi, \log \alpha_1, \ldots, \log \alpha_n\} $$ is linearly independent over $\overline{\mathbb{Q}}$. In particular, $\frac{\pi}{\log \alpha_k}$ is transcendental for all $\alpha_k$.
\end{lemma}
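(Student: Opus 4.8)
The plan is to combine Baker's theorem (Theorem~\ref{T4}) with the hypothesis on linear independence of the logarithms, and then to treat $\pi$ separately, since $\pi$ is not of the form $\log\alpha$ for any algebraic $\alpha$. First I would invoke Baker's theorem directly: since $\log\alpha_1,\ldots,\log\alpha_n$ are linearly independent over $\mathbb{Q}$, Theorem~\ref{T4} gives that $1,\log\alpha_1,\ldots,\log\alpha_n$ are linearly independent over $\overline{\mathbb{Q}}$. This immediately handles the sub-collection $\{1,\log\alpha_1,\ldots,\log\alpha_n\}$; the entire burden of the proof is to adjoin $\pi$ to this set while preserving $\overline{\mathbb{Q}}$-linear independence.

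The key step is therefore to suppose, for contradiction, a nontrivial algebraic relation
\begin{align*}
c_{-1}\pi + c_0 \cdot 1 + \sum_{j=1}^n c_j \log\alpha_j = 0,
\end{align*}
with $c_{-1},c_0,\ldots,c_n \in \overline{\mathbb{Q}}$ not all zero. If $c_{-1}=0$, then Baker's theorem (applied as above) forces $c_0=c_1=\cdots=c_n=0$, a contradiction; hence $c_{-1}\neq 0$. Dividing through by $c_{-1}$, I would rewrite the relation as
\begin{align*}
\pi = -\frac{c_0}{c_{-1}} - \sum_{j=1}^n \frac{c_j}{c_{-1}}\log\alpha_j,
\end{align*}
so that $\pi$ is expressed as an algebraic number plus an $\overline{\mathbb{Q}}$-linear combination of the $\log\alpha_j$. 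This is exactly the shape where Lemma~\ref{L1} applies: writing the right-hand side as $c_0'\cdot\pi + \sum d_j\log\alpha_j$ would clash with the conclusion that such a combination (with the $\pi$-coefficient being what remains) is transcendental. More cleanly, I would move everything to one side and apply Lemma~\ref{L1} with its nonzero coefficient on $\pi$ being $c_{-1}$: the lemma asserts that $c_{-1}\pi + \sum_{j=1}^n c_j\log\alpha_j$ is transcendental, in particular nonzero, so it cannot equal the algebraic number $-c_0$. This contradiction establishes the full linear independence of $\{1,\pi,\log\alpha_1,\ldots,\log\alpha_n\}$ over $\overline{\mathbb{Q}}$.

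For the final assertion, that $\pi/\log\alpha_k$ is transcendental for every $k$, I would argue that if $\pi/\log\alpha_k$ were algebraic, say equal to $\beta\in\overline{\mathbb{Q}}$, then $\pi - \beta\log\alpha_k = 0$ would be a nontrivial $\overline{\mathbb{Q}}$-linear relation among $\pi$ and $\log\alpha_k$ (note $\log\alpha_k\neq 0$ since the $\log\alpha_j$ are linearly independent over $\mathbb{Q}$, hence individually nonzero), contradicting the independence just proved. I expect the only subtle point to be the bookkeeping in the case analysis on whether $c_{-1}$ vanishes: one must be careful to apply Baker's theorem in the $c_{-1}=0$ branch and Lemma~\ref{L1} in the $c_{-1}\neq 0$ branch, since neither tool alone covers both the role of $1$ and the role of $\pi$ simultaneously. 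No genuinely hard estimate arises; the content is entirely in correctly orchestrating the two cited transcendence results.
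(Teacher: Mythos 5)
Your proof is correct, but it takes a different route from the paper. The paper writes $\pi = -i\log(-1)$ and makes a single application of Baker's theorem (Theorem~\ref{T4}) to the augmented collection $\{\log(-1),\log\alpha_1,\ldots,\log\alpha_n\}$, reducing the whole lemma to checking that this set is linearly independent over $\mathbb{Q}$ --- which it does by exponentiating an integer relation to get $\prod_k\alpha_k^{2b_k}=1$ and invoking the multiplicative independence of the $\alpha_k$. You instead keep everything real: you split on whether the coefficient of $\pi$ vanishes, dispatch the vanishing case by Baker's theorem applied to $\{1,\log\alpha_1,\ldots,\log\alpha_n\}$, and dispatch the nonvanishing case by Lemma~\ref{L1} (Murty--Saradha), whose hypotheses are met because the $\alpha_j$ are positive algebraic numbers and the $\pi$-coefficient is nonzero. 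What the paper's approach buys is economy --- one invocation of Baker covers $1$, $\pi$, and all the logarithms at once --- at the price of working with a complex value of the logarithm and a small multiplicative-independence verification. What your approach buys is that it never leaves the positive reals and uses only results as literally stated in the preliminaries, at the price of a case analysis; since Lemma~\ref{L1} is itself a packaged consequence of Baker's theorem, the two arguments rest on the same foundation. Your treatment of the final claim about $\pi/\log\alpha_k$ (noting $\log\alpha_k\neq 0$ and reading off a contradiction with the independence just proved) matches what the paper leaves implicit.
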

\begin{proof}
First note that, we can represent $\pi = -i \log(-1)$. Additionally, in the light of Baker's theorem (see Theorem \ref{T4}), to establish the linear independence of the set of numbers $$\{ 1, \log(-1), \log \alpha_1, \ldots, \log \alpha_n \}$$  over $\overline{\mathbb{Q}}$, it is enough to show that $$\{ \log(-1), \log \alpha_1, \ldots, \log \alpha_n \}$$ is linearly independent over $\mathbb{Q}$.\\
Now, let integers $b_0, b_1, \ldots, b_n$ be such that
$$b_0 \log(-1) + b_1 \log \alpha_1 + \cdots + b_n \log \alpha_n = 0.$$
This implies $$\prod_{k=1}^n\alpha_k^{2b_k} = 1.$$
Since $\log \alpha_1, \ldots, \log \alpha_n$ are linearly independent over $\mathbb{Q}$, it follows that $\alpha_k$ for all $k \in \{1, \ldots, n\}$ are multiplicatively independent over $\mathbb{Q}$. Thus, $b_k = 0$ for all $k \in \{1, \ldots, n\}$ As a result, $b_0 = 0$. This completes the proof.
\end{proof}
Moreover, to outline the proof of Theorem \ref{T2}, we require a result given by Banerjee and Wilkerson in 2017 \cite{BW}. The statement of their theorem is expressed as follows:
\begin{theorem} \label{T5}
The Lambert series $\mathscr{L}_q(s, x)$ has the following expansion at $q=1$:
\begin{enumerate}
	\item For $s \neq 0,-1,-2, \ldots$,
	$$\mathscr{L}_q(s, x) \sim \frac{\Gamma(1+s) \zeta(1+s, x)}{t^{1+s}}+\sum_{k=0}^{\infty} \frac{(-1)^k \zeta(1-s-k) B_k(x)}{k !} t^{k-1}.$$
	
	\item For $s=0$,
	$$\mathscr{L}_q(0, x) \sim \frac{\psi(x)+\log \log \frac{1}{q} }{\log q}-\sum_{k=1}^{\infty} \frac{B_k B_k(x)}{k k !} (-\log q)^{k-1}.$$
	
	\item For $s=-m=-1,-2,-3, \ldots$,
	\begin{align*}
	\mathscr{L}_q(-m, x) \sim & \frac{(-1)^{m-1}\left[m \zeta^{\prime}(1-m, x)+\left(\log t-H_{m-1}\right) B_m(x)\right]}{m !} t^{m-1} \\
	&\quad +\sum_{k=0}^{m-1} \frac{(-1)^k \zeta(1+m-k) B_k(x)}{k !} t^{k-1} \\
	&\quad \quad +(-1)^{m-1} \sum_{k=m+1}^{\infty} \frac{B_{k-m} B_k(x)}{(k-m) k !} t^{k-1},
	\end{align*}
\end{enumerate}
where $B_k$ is the $k$-th Bernoulli number, $B_k(x)$ is the $k$-th Bernoulli polynomial, and $H_m$ is the $m$-th harmonic number.
\end{theorem}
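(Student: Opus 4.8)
The plan is to realise $\mathscr{L}_q(s,x)$ as a harmonic sum and read its behaviour as $q\to1^-$ off the poles of its Mellin transform. Throughout I set $t:=\log(1/q)>0$, so that $q=e^{-t}$ and $q\to1^-$ becomes $t\to0^+$; this is exactly the variable in the statement, since $\log\log(1/q)=\log t$, $(-\log q)^{k-1}=t^{k-1}$ and $1/\log q=-t^{-1}$. Writing $F(t):=\mathscr{L}_{e^{-t}}(s,x)=\sum_{k=1}^\infty k^s e^{-ktx}/(1-e^{-kt})$, I would first compute $\widetilde F(w)=\int_0^\infty F(t)\,t^{w-1}\,dt$. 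Integrating term by term and substituting $u=kt$ decouples the $k$-sum from the integral: the integral becomes $\int_0^\infty u^{w-1}e^{-xu}/(1-e^{-u})\,du=\Gamma(w)\zeta(w,x)$ (the integral representation of the Hurwitz zeta function quoted in the Introduction), while the remaining prefactors collapse to $\sum_{k\ge1}k^{s-w}=\zeta(w-s)$. Hence
\begin{equation*}
\widetilde F(w)=\zeta(w-s)\,\Gamma(w)\,\zeta(w,x),
\end{equation*}
valid in the fundamental strip $\Re(w)>\max\{1,\,1+\Re(s)\}$, the interchange being justified by absolute convergence there.

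The second step is Mellin inversion, $F(t)=\frac{1}{2\pi i}\int_{(c)}\widetilde F(w)\,t^{-w}\,dw$, followed by shifting the contour to the left. By Stirling's formula $\Gamma(w)$ decays exponentially on vertical lines while the two zeta factors grow only polynomially, so the shifted integrals vanish and the transfer (converse mapping) theorem gives
\begin{equation*}
F(t)\sim\sum_{w_0}\operatorname*{Res}_{w=w_0}\big[\widetilde F(w)\,t^{-w}\big]\qquad(t\to0^+),
\end{equation*}
the sum ranging over the poles to the left of the strip: $w=1+s$ (simple, from $\zeta(w-s)$, residue $1$); $w=1$ (simple, from $\zeta(w,x)$, residue $1$); and $w=0,-1,-2,\dots$ (from $\Gamma$, with $\operatorname{Res}_{w=-j}\Gamma(w)=(-1)^j/j!$). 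The three cases of the theorem correspond to whether these poles stay distinct or collide.

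In the generic case $s\notin\{0,-1,-2,\dots\}$ all poles are simple and distinct. The pole at $w=1+s$ gives the leading $\Gamma(1+s)\zeta(1+s,x)\,t^{-(1+s)}$; the pole at $w=1$ gives $\zeta(1-s)\,t^{-1}$, the $k=0$ term of part (1); and each pole $w=-j$ gives $\zeta(-j-s)\tfrac{(-1)^j}{j!}\zeta(-j,x)\,t^{j}$, which after $\zeta(-j,x)=-B_{j+1}(x)/(j+1)$ and the re-indexing $k=j+1$ yields the terms $k\ge1$, assembling into $\sum_{k\ge0}\frac{(-1)^k\zeta(1-s-k)B_k(x)}{k!}t^{k-1}$. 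If $s=0$ the pole of $\zeta(w-s)$ at $w=1$ merges with that of $\zeta(w,x)$; expanding $\zeta(1+\varepsilon)=\varepsilon^{-1}+\gamma+\cdots$, $\zeta(1+\varepsilon,x)=\varepsilon^{-1}-\psi(x)+\cdots$, $\Gamma(1+\varepsilon)=1-\gamma\varepsilon+\cdots$ and $t^{-1-\varepsilon}=t^{-1}(1-\varepsilon\log t+\cdots)$ and extracting the $\varepsilon^{-1}$ coefficient produces $-t^{-1}(\psi(x)+\log t)$, i.e.\ the leading term of part (2), with the $\Gamma$-poles supplying the remaining sum via $\zeta(-j)=(-1)^jB_{j+1}/(j+1)$. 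If $s=-m$ ($m\ge1$) the pole at $w=1-m$ merges with the $\Gamma$-pole at $w=-(m-1)$; using $\zeta(1+\varepsilon)=\varepsilon^{-1}+\gamma+\cdots$, $\Gamma(1-m+\varepsilon)=\frac{(-1)^{m-1}}{(m-1)!}\big(\varepsilon^{-1}+(H_{m-1}-\gamma)+\cdots\big)$ and $\zeta(1-m+\varepsilon,x)=\zeta(1-m,x)+\zeta'(1-m,x)\varepsilon+\cdots$, the $\varepsilon^{-1}$ coefficient together with $\zeta(1-m,x)=-B_m(x)/m$ reproduces $\frac{(-1)^{m-1}}{m!}\big[m\zeta'(1-m,x)+(\log t-H_{m-1})B_m(x)\big]t^{m-1}$, while the surviving simple poles $w=1,0,\dots,-(m-2)$ form the finite middle sum and $w=-m,-(m+1),\dots$ form the tail sum, again via the two zeta-evaluations.

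The main obstacle is not the algebra but the analysis underlying the transfer step: one must establish uniform, integrable decay of $\widetilde F(w)$ on the shifted vertical lines — through Stirling's bound on $\Gamma$ combined with a convexity/growth estimate for the two zeta factors — in order to justify moving the contour past infinitely many poles and to control the remainder after truncation. Once those bounds are secured, the residue bookkeeping in the two coincident-pole regimes, which is the only genuinely delicate computation, is routine given the Laurent expansions above.
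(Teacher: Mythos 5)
The paper offers no proof of this statement to compare against: Theorem \ref{T5} is quoted as an external result of Banerjee and Wilkerson \cite{BW}, and the paper only uses its $s=0$ case in the proof of Theorem \ref{T2}. Your Mellin-transform derivation is, however, exactly the method of the cited source (the Flajolet--Gourdon--Dumas harmonic-sum technique), and your outline is correct: the factorization $\widetilde F(w)=\zeta(w-s)\Gamma(w)\zeta(w,x)$ in the strip $\Re(w)>\max\{1,1+\Re(s)\}$ is right, and the residue bookkeeping checks out in all three regimes. In particular, your double-pole expansions are accurate: at $w=1$ for $s=0$ the $\varepsilon^{-1}$-coefficient of $(\varepsilon^{-1}+\gamma)(\varepsilon^{-1}-\psi(x))(1-\gamma\varepsilon)t^{-1}(1-\varepsilon\log t)$ is indeed $-t^{-1}(\psi(x)+\log t)$, matching $\frac{\psi(x)+\log\log(1/q)}{\log q}$, and at $w=1-m$ the combination $\zeta'(1-m,x)+(H_{m-1}-\log t)\zeta(1-m,x)$ together with $\zeta(1-m,x)=-B_m(x)/m$ reproduces the stated leading term of part (3).

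Two refinements you should make. First, your phrase ``the shifted integrals vanish'' is not correct as stated and cannot be, since the $k$-sums in the theorem are in general divergent asymptotic series (hence the $\sim$): one does not move the contour past infinitely many poles at once, but rather to $\Re(w)=-M-\tfrac12$ for each fixed $M$, where Stirling decay of $\Gamma$ against polynomial growth of the two zeta factors shows the shifted integral is $O\big(t^{M+1/2}\big)$; the asymptotic expansion is the totality of these truncated identities. You partially acknowledge this in your last paragraph (``control the remainder after truncation''), but the body of the argument should be phrased in terms of finite shifts with explicit remainders rather than a vanishing limit. Second, be careful with the Bernoulli convention in the tail of part (3): your residues give the coefficient $(-1)^{k+1}B_{k-m}/(k-m)$ via $\zeta(1+m-k)=-B_{k-m}/(k-m)$, which agrees with the stated $(-1)^{m-1}B_{k-m}/(k-m)$ when $k-m$ is even, while the single term $k=m+1$ uses $\zeta(0)=-\tfrac12$ and matches only under the convention $B_1=+\tfrac12$ (equivalently, absorb the sign into $B_1=-\tfrac12$); this is routine but worth one line, since you invoke $B_1=-\tfrac12$ elsewhere. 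With these two points repaired, your proposal is a complete and correct proof along the same lines as the original one in \cite{BW}.
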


\textbf{Note:} We shall consider $q>1$ throughout this paper.

\section{\bf Proofs of the main theorems}
\begin{proof}[\bf{Proof of Theorem \ref{T1}}]
The binomial expansion of a $q$-analogue of the Hurwitz zeta function results in
\begin{align}
\zeta_q(s,x)& =  (q-1)^s\displaystyle\sum_{n=0}^{\infty}q^{n+x}(q^{n+x}-1)^{-s} \nonumber \\
&=(q-1)^s\sum_{k=0}^{\infty}\frac{s(s+1)\cdots(s+k-1)}{k!}\frac{q^{(s+k-1)(1-x)}}{q^{s+k-1}-1}.\label{E4}
\end{align}
Then, clearly $\zeta_q(s,x)$ is meromorphic for $s \in \mathbb{C}$ and has simple poles at points in the set $\big\{1 +i\frac{2\pi b}{\log q} \mid b \in \mathbb{Z}\big\}~ \cup ~\big\{ a + i\frac{2\pi  b}{\log q} \mid a,b \in \mathbb{Z}, a \leq 0, b \neq 0\big\}$, with $s=1$ being a simple pole with residue $\frac{q-1}{\log q}$.
Now expanding Equation (\ref{E4}), we get:
\begin{align}
\zeta_q(s,x)=(q-1)^s\Bigg\{&\frac{q^{(s-1)(1-x)}}{q^{s-1}-1} + s \frac{q^{s(1-x)}}{q^s-1} + \frac{s(s+1)}{2}\frac{q^{(s+1)(1-x)}}{q^{s+1}-1} \nonumber\\
& + \frac{s(s+1)(s+2)}{6}\frac{q^{(s+2)(1-x)}}{q^{s+2}-1} + \cdots\Bigg\}.\label{E5}
\end{align}
Note that around $s=1$, we have:
\begin{align*}
(q-1)^s &= (q-1) + \{(q-1)\log(q-1)\}(s-1) + \frac{1}{2}\{(q-1)\log^2(q-1)\}(s-1)^2 \\
&\quad+ \frac{1}{6}\{(q-1)\log^3(q-1)\}(s-1)^3 + \cdots,\\
\frac{q^{(s-1)(1-x)}}{q^{s-1}-1}& = \frac{1}{\log q(s-1)} + \Bigg(\frac{1}{2} - x\Bigg) +\frac{1}{12}(1-6x + 6x^2)\log q(s-1)\\
&\quad+\frac{1}{12}(-x + 3x^2 -2x^3)\log^2 q(s-1)^2\\
& \quad \quad + \frac{1}{720}(-1 +30x^2- 60x^3 + 30x^4)\log^3 q (s-1)^3 + \cdots,\\
s \frac{q^{s(1-x)}}{q^s-1}& = \frac{q^{1-x}}{(q-1)} - \frac{q^{1-x}(1-q+ \log q -x \log q + qx \log q)}{(q-1)^2} (s - 1)\\
&\quad + \Bigg(\frac{(2- 2q -2x + 4qx - 2q^2x)}{2(q-1)^3}\Bigg)q^{1-x} \log q (s-1)^2\\
&\quad \quad +\frac{(1 + q -2x + 2qx +x^2 -2qx^2 + q^2x^2)}{2(q-1)^3}q^{1-x} \log^2 q (s-1)^2 +  \cdots.\\
\end{align*}
A similar expansion of the other terms in Equation (\ref{E5}) leads to the following expression:
\begin{align}
\zeta_q(s,x) & = \Bigg[(q-1) + \{(q-1)\log(q-1)\}{\bf{(s-1)}} + \frac{1}{2}\{(q-1)\log^2(q-1)\}{\bf{(s-1)^2}}\nonumber \\
&\quad+ \frac{1}{6}\{(q-1)\log^3(q-1)\}{\bf{(s-1)^3}} + \cdots\Bigg]\Bigg[\frac{1}{\log q}{\bf{\frac{1}{s-1}}} + \Bigg(\frac{1}{2} - x + \frac{q^{1-x}}{q-1} \nonumber\\
&\quad \quad + \frac{q^{2-2x}}{q^2-1} + \frac{q^{3-3x}}{q^3-1} + \cdots\Bigg){\bf{(s-1)^0}} +\Bigg( \frac{\log q}{12}(1-6x + 6x^2) \nonumber\\
& \quad\quad\quad - \frac{q^{1-x}(1-q+ \log q -x \log q + qx \log q)}{(q-1)^2}+ \cdots \Bigg){\bf{(s-1)}} + \cdots \nonumber\\
&= \frac{q-1}{\log q (s-1)} + \Bigg(\sum_{n=1}^{\infty}\frac{q^{n(1-x)}}{[n]_q} + \frac{(q-1) \log (q-1)}{\log q} - \frac{q-1}{2}  + (q-1)(1-x)\Bigg) \nonumber\\
& \quad +\Bigg(\sum_{n=1}^{\infty}\frac{q^{n(1-x)} \log(q-1)}{[n]_q} + \frac{(q-1) \log^2 (q-1)}{\log q} - \frac{q-1}{2} \log (q-1) \nonumber\\
&\quad \quad + (q-1)(1-x)\log(q-1) + \frac{q-1}{12}\log q - \sum_{n=1}^\infty \frac{(1 + (q^n-1)x)q^{n(1-x)}}{[n]_q(q^n-1)} \log q \nonumber\\
&\quad\quad\quad- \frac{(q-1)(1-x)x}{2}\log q + \sum_{n=1}^\infty\frac{q^{n(1-x)}s(n+1,2)}{n![n]_q}\Bigg) (s-1) + \cdots .\label{E6}
\end{align}
Therefore, we can conclude that
\begin{align}
\gamma_0(q,x)&= \sum_{n=1}^{\infty}\frac{q^{n(1-x)}}{[n]_q} + \frac{(q-1) \log (q-1)}{\log q} - \frac{q-1}{2}  + (q-1)(1-x), \label{E7}\\
\gamma_1(q,x)&= \Bigg(\sum_{n=1}^\infty\frac{q^{n(1-x)}}{[n]_q} + \frac{(q-1)\log(q-1)}{2\log q} - \frac{q-1}{2} + (q-1)(1-x)\Bigg)\log(q-1) \nonumber \\
&\quad + \Bigg(\frac{q-1}{12} - \sum_{n=1}^\infty \frac{(1 + (q^n-1)x)q^{n(1-x)}}{[n]_q(q^n-1)} - \frac{(q-1)(1-x)x}{2}\Bigg)\log q \nonumber\\
&\quad\quad + \sum_{n=1}^\infty\frac{q^{n(1-x)}s(n+1,2)}{n![n]_q}. \label{E8}
\end{align}
This completes the proof.
\end{proof}

\begin{proof}[\bf{Proof of Theorem \ref{T2}}]
From Equation (\ref{E7}), we have
\begin{align*}
\gamma_0(q,x)&= \sum_{n=1}^{\infty}\frac{q^{n(1-x)}}{[n]_q} + \frac{(q-1) \log (q-1)}{\log q} - \frac{q-1}{2}  + (q-1)(1-x)\\
& =  (q-1)\sum_{n=1}^{\infty}\frac{(1/q)^{nx}}{\left (1- \left(\frac{1}{q}\right)^n\right)} + \frac{(q-1) \log (q-1)}{\log q} - \frac{q-1}{2}  + (q-1)(1-x)\\
&= (q-1)\mathscr{L}_{1/q}(0,x) + \frac{(q-1) \log (q-1)}{\log q} - \frac{q-1}{2}  + (q-1)(1-x).
\end{align*} 
where $q>1$ and $ 0< x <1$. Now, consider
\begin{align*}
\gamma_0(q,x) - \gamma_0(q,1-x) =  (q-1)(\mathscr{L}_{1/q}(0,x) - \mathscr{L}_{1/q}(0,1-x)) + (q-1)(1-2x).
\end{align*} 
In light of Theorem \ref{T5}, when $s=0$, the expression for $\mathscr{L}q(0, x)$ is given by:
$$\mathscr{L}_q(0, x) = \frac{\psi(x)+\log \log \frac{1}{q} }{\log q}-\sum_{k=1}^{\infty} \frac{B_k B_k(x)}{k k !} (-\log q)^{k-1},$$
where $|q| <1$. So, for $q >1$, we have:
$$\mathscr{L}_{1/q}(0, x) = - \frac{\psi(x)+\log \log q }{\log q}-\sum_{k=1}^{\infty} \frac{B_k B_k(x)}{k k !} (\log q)^{k-1}.$$
Applying the reflection formula for the digamma function which is given as follows:
$$\psi(1-x) - \psi(x) = \pi \cot \pi x $$
and the relation of Bernoulli polynomials which is stated as follows:
$$B_k(1-x) = (-1)^k B_k(x),$$
we derive the following expression:
\begin{align*}
\gamma_0(q,x) - \gamma_0(q,1-x) = \left(\frac{q-1}{ \log q}\right)\pi \cot \pi x - 2 B_1 B_1(x) + (q-1)(1-2x),
\end{align*}
as $B_k$'s are zero for odd integer $k>1$.
Now, $$B_1 = -\frac{1}{2}$$
and $$B_1(x) = x - \frac{1}{2}.$$
Thus, 
$$\gamma_0(q,x) - \gamma_0(q,1-x) = \left(\frac{q-1}{ \log q}\right)\pi \cot \pi x + (2q-3)\left(\frac{1}{2} - x\right).$$
For $x = \frac{a}{b}$, where $b \geq 3$ and $ 1 \leq a < b/2$ with $(a, b) =1$, we obtain:
$$\gamma_0\left(q,\frac{a}{b}\right) - \gamma_0\left(q,1-\frac{a}{b}\right) = \left(\frac{q-1}{ \log q}\right)\pi \cot  \frac{\pi a}{b} + (2q-3)\left(\frac{1}{2} - \frac{a}{b}\right).$$
Finally, using Lemma \ref{L3}, we conclude that $\gamma_0\left(q,\frac{a}{b}\right) - \gamma_0\left(q,1-\frac{a}{b}\right)$ is a transcendental number, thereby concluding the proof.
\end{proof}

\begin{proof}[\bf{Proof of Corollary \ref{C1}}]
	From Theorem \ref{T2}, we have:
	$$\gamma_0\left(q,\frac{a}{b}\right) - \gamma_0\left(q,1-\frac{a}{b}\right) = \left(\frac{q-1}{ \log q}\right)\pi \cot \pi \frac{a}{b} + (2q-3)\left(\frac{1}{2} - \frac{a}{b}\right).$$
Now, by summing over all the $a$, where $1 \leq a < b/2$ with $(a,b) =1$, we get the desired result.
\end{proof}

\begin{proof}[\bf{Proof of Theorem \ref{T6}}]
	Let  $c_1, \ldots, c_{\varphi(b)/2} \in \mathbb{Q}$ be such that:
\begin{align*}
\sum_{\substack{1 \leq a_k < b/2 \\ (a_k , b) = 1}}c_k \left (\gamma_0\left(q,\frac{a_k}{b}\right) - \gamma_0\left(q,1-\frac{a_k}{b}\right)\right)  =0
\end{align*}
Substituting the value of $\gamma_0\left(q,\frac{a_k}{b}\right) - \gamma_0\left(q,1-\frac{a_k}{b}\right)$ for each $k$, we obtain:
\begin{align*}
&\left(\frac{q-1}{\log q}\right)\pi \left( c_1\cot \frac{\pi a_1}{b} + \cdots + c_{\varphi(b)/2}\cot \frac{\pi a_{\varphi(b)/2}}{b}\right)\\
&+ (2q-3) \left(c_1 \left(\frac{1}{2} - \frac{a_1}{b}\right) + \cdots + c_{\varphi(b)/2} \left(\frac{1}{2} - \frac{a_{\varphi(b)/2}}{b}\right)\right) = 0,
\end{align*}
which implies that
\begin{align*}
&\left(\frac{q-1}{\log q}\right)\pi \left( c_1\cot \frac{\pi a_1}{b} + \cdots + c_{\varphi(b)/2}\cot \frac{\pi a_{\varphi(b)/2}}{b}\right)\\
&= (3 - 2q) \left(c_1 \left(\frac{1}{2} - \frac{a_1}{b}\right) + \cdots + c_{\varphi(b)/2} \left(\frac{1}{2} - \frac{a_{\varphi(b)/2}}{b}\right)\right) 
\end{align*}
According to Lemma \ref{L3}, $\frac{\pi}{\log q}$ is a transcendental number. Therefore, the left-hand side of the above equality is an algebraic multiple of a transcendental number, while the right-hand side is an algebraic number. Hence, we must have:
$$  c_1\cot \frac{\pi a_1}{b} + \cdots + c_{\varphi(b)/2}\cot \frac{\pi a_{\varphi(b)/2}}{b}=0$$
and 
$$c_1 \left(\frac{1}{2} - \frac{a_1}{b}\right) + \cdots + c_{\varphi(b)/2} \left(\frac{1}{2} - \frac{a_{\varphi(b)/2}}{b}\right)=0.$$
Now, using Theorem \ref{T3} due to Okada for the case $k=1$, we know that the following set of numbers:
$$\left\{\cot \frac{\pi a_k}{b}: 1 \leq a_k < b/2, (a_k,b)=1, 1 \leq k \leq {\varphi(b)/2}\right\}$$
is linearly independent over $\mathbb{Q}$. Thus, $c_k = 0$ for all $ k \in \{1, \ldots, {\varphi(b)/2}\}$. Hence, the proof is complete.
\end{proof}

\begin{proof}[\bf{Proof of Corollary \ref{C2}}]
	The corollary is a direct consequence of Theorem \ref{T2} and Theorem \ref{T6}.
\end{proof}

\begin{proof}[\bf{Proof of Theorem \ref{T7}}]
First note that the space $V_{\mathbb{Q}}(q,b)$	 is also spanned by the following sets of real numbers:
\begin{align*}
 &\left \{1,	\gamma_0 \left(q, \frac{a}{b} \right) - \gamma_0 \left(q,1- \frac{a}{b} \right): 1 \leq a < \frac{b}{2}, (a,b)=1 \right \},\\
\text{and}~&\left \{	\gamma_0 \left(q, \frac{a}{b} \right) + \gamma_0 \left(q,1- \frac{a}{b} \right): 1 \leq a < \frac{b}{2}, (a,b)=1\right \}
\end{align*}
Now, from Theorem \ref{T2}, we obtain the expression:
\begin{align*}
\gamma_0 \left(q, \frac{a}{b} \right) - \gamma_0 \left(q,1- \frac{a}{b} \right) = \left( \frac{q-1}{\log q} \right) \pi \cot \left( \pi \frac{a}{b} \right) + (2q-3) \left(\frac{1}{2} - \frac{a}{b} \right)
\end{align*}
By employing Theorem \ref{T6}, we establish that the set
$$\left \{	\gamma_0 \left(q, \frac{a}{b} \right) - \gamma_0 \left(q,1- \frac{a}{b} \right):  1 \leq a < \frac{b}{2}, (a,b)=1\right \}$$
is linearly independent over $\mathbb{Q}$. Finally, utilizing Corollary \ref{C2}, we deduce that
$$dim_{\mathbb{Q}}V_{\mathbb{Q}}(q,b) \geq \frac{\varphi(b)}{2} +1.$$
\end{proof}



\begin{thebibliography}{10}
	
\bibitem{AB}
A.\,Baker, \emph{Transcendental Number Theory}, Cambridge University Press, 1975.

\bibitem{TSG}
T.\,Chatterjee and S.\,Garg, \emph{On $q$- analogue of Euler-Stieltjes Constant}, Proc.\,Amer.\,Math.\, Soc., \textbf{151} (2023), 2011--2022.

\bibitem{TSG1}
T.\,Chatterjee and S.\,Garg, \emph{On arithmetic nature of a $q$-Euler-double zeta values}, Proc.\ Amer.\ Math.\ Soc., \textbf{152} (2024), 1661-1672.

\bibitem{TSG2}
T.\,Chatterjee and S.\,Garg, \emph{Algebraic identities among $q$-analogue of Euler double zeta values}, submitted.

\bibitem{CM}
T.\,Chatterjee and M.\,Ram Murty, \emph{Non-vanishing of Dirichlet series with periodic coefficients}, J.\,Number Theory, \textbf{145} (2014), 1--21.

\bibitem{SC}
S.\,Chowla, \emph{The nonexistence of nontrivial linear relations between the roots of a certain irreducible equation}, J.\,Number Theory, \textbf{2} (1970), 120--123.

\bibitem{J}
F.\,H.\,Jackson, \emph{On $q$- definite integrals}, Quart.\,J.\,Appl.\,Math., \textbf{41} (1910), 193 -- 203

\bibitem{MM}
M.\,Ram Murty and V.\,Kumar Murty, \emph{A problem of Chowla revisited}, J.\,Number Theory, \textbf{131} (2011), 1723--1733.

\bibitem{MS}
M.\,Ram Murty and N.\,Saradha, \emph{Euler–Lehmer constants and a conjecture of Erdös}, J.\,Number Theory, \textbf{130} (2010), 2671--2682.

\bibitem{KW}
N.\,Kurokawa and M.\,Wakayama, \emph{ On $q$-Analogues of the Euler Constant and Lerch's Limit Formula}, Proceedings of the American Mathematical Society, {\bf 132}  (2003), 935 -- 943.

\bibitem{TO}
T.\,Okada, \emph{On an extension of a theorem of S.\,Chowla}, Acta Arith., \textbf{38} (1980/81), 341--345.

\bibitem{BW}
S.\,Banerjee and B.\,Wilkerson, \emph{Asymptotic expansions of {L}ambert series and related $q$-series}, \textbf{13} (2017), 2097--2113.


\end{thebibliography}
\end{document}